\title{\LARGE \bf
Construction of the Barrier for Reach-Avoid Differential Games in Three-Dimensional Space with Four Equal-Speed Players
}
\author{Rui Yan, Zongying Shi and Yisheng Zhong\\Tsinghua National Laboratory for Information Science and Technology (TNList)
\thanks{This work was supported by the National Natural Science Foundation of China under Grants 61374034 and 1210012.}
\thanks{R. Yan, Z. Shi, and Y. Zhong are with the Department of Automation, Tsinghua University, Beijing 100084, China email: yr15@mails.tsinghua.edu.cn and \{szy, zys-dau\}@mail.tsinghua.edu.cn}%
}
\newtheorem{thom}{\textbf{Theorem}}
\newtheorem{lema}{\textbf{Lemma}}
\newtheorem{pbm}{\textbf{Problem}}
\begin{document}

\maketitle
\thispagestyle{empty}
\pagestyle{empty}

\begin{abstract}
This paper considers a reach-avoid differential game in three-dimensional space with four equal-speed players. A plane divides the game space into a play subspace and a goal subspace. The evader aims at entering the goal subspace while three pursuers cooperate to prevent that by capturing the evader. A complete, closed-form barrier for this differential game is provided, by which the game winner can be perfectly predicted before the game starts. All possible cooperations among three pursuers are considered and thus the guaranteed winning for each team is a prior. Furthermore, an algorithm is designed to compute the barrier for multiple pursuers of any numbers and any initial configurations. More realistically, since the whole achieved developments are analytical, they require a little memory without computational burden and allow for real-time updates, beyond the capacity of traditional Hamilton-Jacobi-Isaacs method.

\end{abstract}

\section{INTRODUCTION}

Pursuit-evasion differential games have been studied extensively, and many extensions involving game setups, game scales and specific occasions, have been proposed over the years \cite{Liu6907859,BOPARDIKAR20091771,Oyler2016Pursuit,MACIAS2018271}. Different from the classical pursuit-evasion game formulation, an interesting pursuit-evasion game, called reach-avoid differential game, is proposed in which each team strives to drive the system state into his own target set, while avoiding the target set of the opponent. This definition implies the qualitative solution, namely, which team can achieve its goal. The qualitative solution is a necessary step to solve the more complicated quantitative game whose objective is to optimize specific continuous payoff functions. This game has plenty of applications in areas such as collision avoidance, path planning, security, reachability analysis and confrontational situations \cite{Mylvaganam7909033,Ruiz2013Time,Yan2017Escape}.

The pioneer work on reach-avoid games may be found in \cite{GETZ1979421}, where the so-called two-target qualitative differential game was presented. It mainly focuses on the computation of barrier, a core concept in the game of kind \cite{Is1967Diff}, by employing the classical Isaacs' method \cite{Is1967Diff}. The key part of this method is to determine the boundary of the usable part of the target sets and the initial states for backward integration. 

In the references \cite{Mitchell2005A,Margellos2011Ham,Fisac2015Reach}, the term, reach-avoid differential games, was first introduced and originated from the area of reachability analysis. By defining a value function merging the payoff function and discriminator function with minimax operation, the barrier, or called boundary of reach-avoid set, can be located by finding the zero sublevel set of this value function \cite{Chen104941,8619382,Chen8267187}. This approach involves solving Hamilton-Jacob-Isaacs partial differential equation, and thus bears the curse of dimensionality.

For certain games and game setups, geometric method is widely employed and provides both qualitative and quantitative analysis about the game winner and optimal strategies. For example, Voronoi diagram and Apollonius circle are used for generating strategies in multiplayer pursuit-evasion games, such as Voronoi area minimization \cite{Zhou2016Cooperative,Pierson2017Int} and the closest point on Apollonius circle \cite{YAN8279644}. In \cite{Zou8588382}, the line-of-sight was proposed to address the target tracking problems in an environment with obstacles. Reference \cite{ChenMo2016Multiplayer} designed a constructive way to approximate the reach-avoid set by successively creating a number of straight lines called paths of defense.  

Recently several attractive reach-avoid games on specific regions were analyzed. The work in \cite{Garcia8619026} revisited the Capture-the-Flag differential game in convex domain by deriving the state-feedback strategy for each player and computing the value function. In \cite{Yan2017Defense}, the authors presented a defense game in a circular region and constructed the barrier analytically. Also in a circular region, by specifying the form of feedback control law, a confinement-escape problem was investigated in \cite{Li7503136}. 

This paper considers a three-pursuer-one-evader reach-avoid differential game in three-dimensional space. All players have the same speed and the game space $\mathbb{R}^3$ is separated by a plane into two subspaces: play subspace and goal subspace. The evader initially lying in the play subspace, attempts to enter the goal subspace by penetrating the splitting plane, while three pursuers aim at preventing that by capturing it. Actually, from another side, this game can also be viewed that an evader tries to escape from a subspace through its boundary which is a plane, while avoiding moving obstacles formulated as pursuers, especially, three dynamic obstacles are considered here. 

The main contributions of this paper are as follows: First, the condition to determine which players will contribute to the construction of barrier, is given. Second, we consider all possible cooperations among pursuers and construct the corresponding barrier by which the space of initial configuration is divided into two winning subspaces: pursuer winning subspace and evader winning subspace. Third, extensions to multiple pursuers are performed. Fourth, analytical solutions are obtained which allow for real-time computations and updates.  

The rest of this paper is organized as follows. Section \ref{problemdescriptionsection} states the problem. In Section \ref{prelinimaries}, several preliminary results are presented. The barrier and winning subspaces are analytically computed in Section \ref{winningsubandbarriersec}. Finally, Section \ref{conclusion} concludes the paper.

\begin{figure}\centering
\graphicspath{{figure_final/}}
\includegraphics[width=83mm,height=42mm]{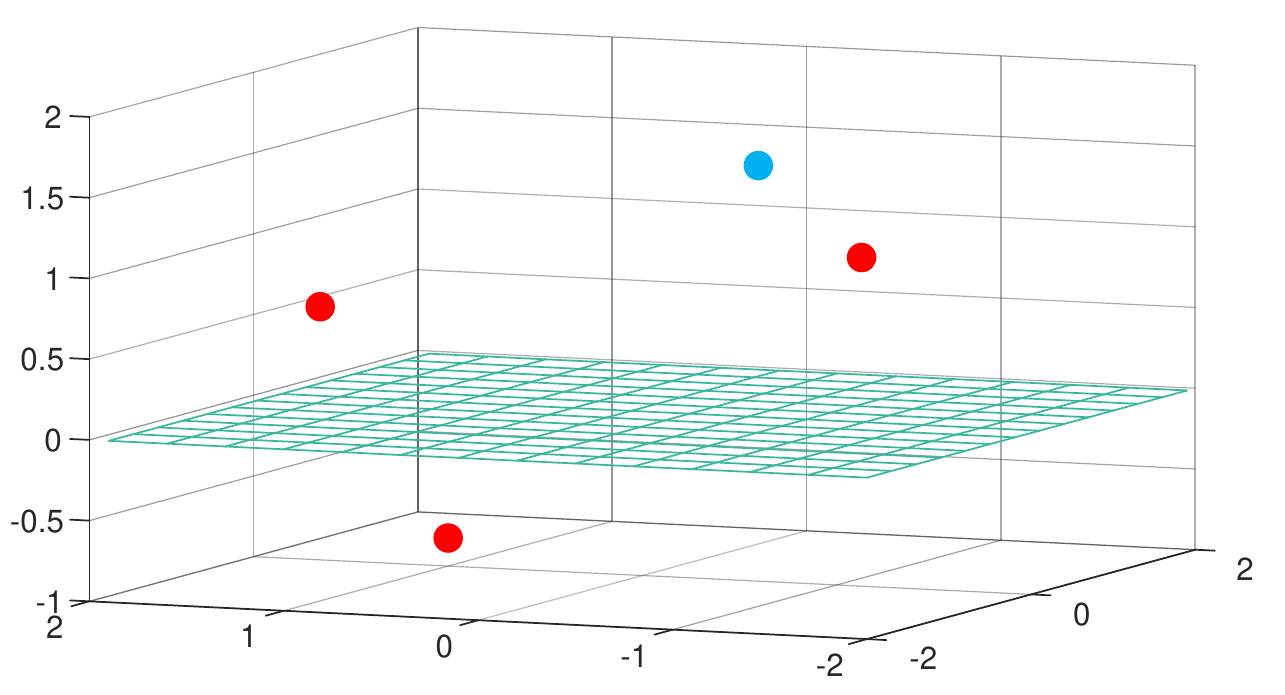}
\put(-110,22){$\scriptstyle{\Omega_{\rm goal}}$}
\put(-110,73){$\scriptstyle{\Omega_{\rm play}}$}
\put(-65,45){$\scriptstyle{\mathcal{T}}$}
\put(-190,60){$\scriptstyle{P_1}$}
\put(-167,19){$\scriptstyle{P_2}$}
\put(-73,70){$\scriptstyle{P_3}$}
\put(-108,90){$\scriptstyle{E}$}
\put(-35,6){$\scriptstyle{x}$}
\put(-158,0){$\scriptstyle{y}$}
\put(-240,50){$\scriptstyle{z}$}
\caption{Three-pursuer-one-evader reach-avoid differential games in three-dimensional (3D) space, where three pursuers (red) cooperate to capture an evader (blue) before it goes through the green plane $\mathcal{T}$ and gets into the goal region $\Omega_{\rm goal}$.\label{fig:1}} 
\end{figure}

\section{Problem Description}\label{problemdescriptionsection}
\subsection{Reach-Avoid Games with Four Equal-Speed Players}
A three-pursuer-one-evader reach-avoid differential game in three-dimensional space is considered. The game is played in $\mathbb{R}^3$, in which a plane $\mathcal{T}$ splits the game space $\mathbb{R}^3$ into two disjoint subspaces $\Omega_{\rm goal}$ and $\Omega_{\rm play}$, and their mathematical descriptions are given as follows:
\begin{equation}\begin{aligned}\label{ori_tar}
&\mathcal{T}=\big\{\mathbf{z}\in\mathbb{R}^3|K^\mathsf{T}\mathbf{z}=b\big\},\Omega_{\rm{goal}}=\big\{\mathbf{z}\in\mathbb{R}^3|K^\mathsf{T}\mathbf{z}<b\big\}\\
&\Omega_{\rm{play}}=\big\{\mathbf{z}\in\mathbb{R}^3|K^\mathsf{T}\mathbf{z}>b\big\}
\end{aligned}\end{equation}
where $K\in\mathbb{R}^3$ and $b\in\mathbb{R}$ are the known parameters, and $K$ is a nonzero vector. Three pursuers $P_1,P_2,P_3$ and one evader $E$, assumed to be four mass points in $\mathbb{R}^3$, can move freely with simple motion, namely, they are able to change the directions of their motion at each instant of time. Four players are assumed to have the equal unit speed. The evader $E$ is considered to have been captured as soon as his distance from the closest pursuer becomes equal to zero. The evader, starting from $\Omega_{\rm{play}}$, aims at reaching $\Omega_{\rm{goal}}$ without being captured, while three pursuers, initially distributed in any positions of the game space, cooperate to guard $\Omega_{\rm{goal}}$ by capturing $E$. Thus, these two subspaces $\Omega_{\rm{goal}}$ and $\Omega_{\rm{play}}$ shall be called goal subspace and play subspace, respectively. We call $\mathcal{T}$ as target plane (TP). The evader wins if its state can reach $\Omega_{\rm{goal}}$ before captured, while three pursuers win if $E$ can be captured in $\Omega_{\rm{play}}$. The game components are shown in Fig. \ref{fig:1}.

Define the unit control set $\mathcal{U}=\{\mathbf{u}\in\mathbb{R}^3|\|\mathbf{u}\|_2=1\}$, where $\|\cdot\|_2$ stands for the Eculidean norm in $\mathbb{R}^3$. Denote the positions of $P_i$ and $E$ at time $t$ in $\mathbb{R}^3$ by $\mathbf{x}_{P_i}(t)=\big(x_{P_i}(t),y_{P_i}(t),z_{P_i}(t)\big)$ and $\mathbf{x}_E(t)=\big(x_{E}(t),y_{E}(t),z_{E}(t)\big)$, respectively. The kinematic equations of four players for $t\ge0$ have the form
 \begin{equation}\begin{aligned}\label{dynamics}
\dot{\mathbf{x}}_{P_i}(t)&=\mathbf{p}_i(t),&\mathbf{x}_{P_i}(0)&=\mathbf{x}_{P_i}^0,\quad i=1,2,3\\
\dot{\mathbf{x}}_E(t)&=\mathbf{e}(t),&\mathbf{x}_E(0)&=\mathbf{x}_E^0.
\end{aligned}\end{equation}
 Here, $\mathbf{x}_{P_i}^0=(x_{P_i}^0,y_{P_i}^0,z_{P_i}^0)$ is the initial position of $P_i$, $\mathbf{x}_E^0=(x_E^0,y_E^0,z_E^0)$ is the initial position of $E$, and the control inputs at time $t$ for $P_i$ and $E$ are their respective instantaneous unit headings $\mathbf{p}_i(t)\in\mathcal{U}$ and $\mathbf{e}(t)\in\mathcal{U}$. Thus, the whole state space is $\mathbb{R}^{12}$. Unless for clarity, for simplicity, $t$ will be omitted hereinafter.
 
Three pursuers form as a team, and thus they cooperatively choose their controls. The evasion team has only one member $E$. We consider a non-anticipative information structure, as commonly adopted in the differential game literature (see for example, \cite{Elliott1972The,Mitchell2005A}). Under this information structure, each team has complete up-to-date position information of all players and the control employed by the other team, however, it does not know the control that the other team will apply in the future. Additionally, it is also assumed that four player start the game from different positions and $E$ initially lies in $\Omega_{\rm play}$.

\subsection{Problems}
For this reach-avoid differential game in three-dimensional space with three pursuers and one evader, the following  problem will be addressed. 
\begin{pbm}[Game of kind]\label{kind}
Given $K,b$, and initial configuration $K^\mathsf{T}\mathbf{x}_E^0>b$ and $\mathbf{x}_{P_i}^0\in\mathbb{R}^3$ $(i=1,2,3)$, which team can guarantee its own winning? Does this reach-avoid differential game end up with a successful capture or a successful safe arrival when both team adopt their optimal strategies?
\end{pbm}

\section{Preliminaries}\label{prelinimaries}
We first present some preliminary results, which will be used in the subsequent analysis.
\subsection{Efficient Simplification}
Let $\mathbf{z}=(x,y,z)\in\mathbb{R}^3$. In this section, we describe this game in a more concise and clear way. The TP and two subspaces in (\ref{ori_tar}) can be represented by
\begin{equation}\begin{aligned}\label{new_tar}
&\mathcal{T}=\{\mathbf{z}\in\mathbb{R}^3|z=0\},\Omega_{\rm{goal}}=\{\mathbf{z}\in\mathbb{R}^3|z<0\}\\
&\Omega_{\rm{play}}=\{\mathbf{z}\in\mathbb{R}^3|z>0\}.
\end{aligned}\end{equation}

To simplify the analysis further, \autoref{kind} can be reformulated as follows: Given any initial positions of three pursuers, we aim to find the set of initial positions where if the evader initially lies, three pursuers can guarantee the capture before the evader reaches the TP $\mathcal{T}$, which is the pursuer winning subspace, and find the set of initial positions allowing for a successful safe arrival strategy for the evader, which is the evader winning subspace. The surface, curve or point that separates these two subspaces is the barrier. Fixing three pursuers' initial positions provides a clear illustration of the barrier and thus two wining subspaces, as a function of these initial positions. 

\subsection{Evasion Space}
Let the set of points in $\mathbb{R}^3$ which $E$ can reach before the pursuer(s), regardless of the pursuer(s)' best effort, be called evasion space (ES), and the surface which bounds ES is called the boundary of ES (BES). 

\begin{figure}\centering
\graphicspath{{figure_final/}}
\includegraphics[width=70mm,height=48mm]{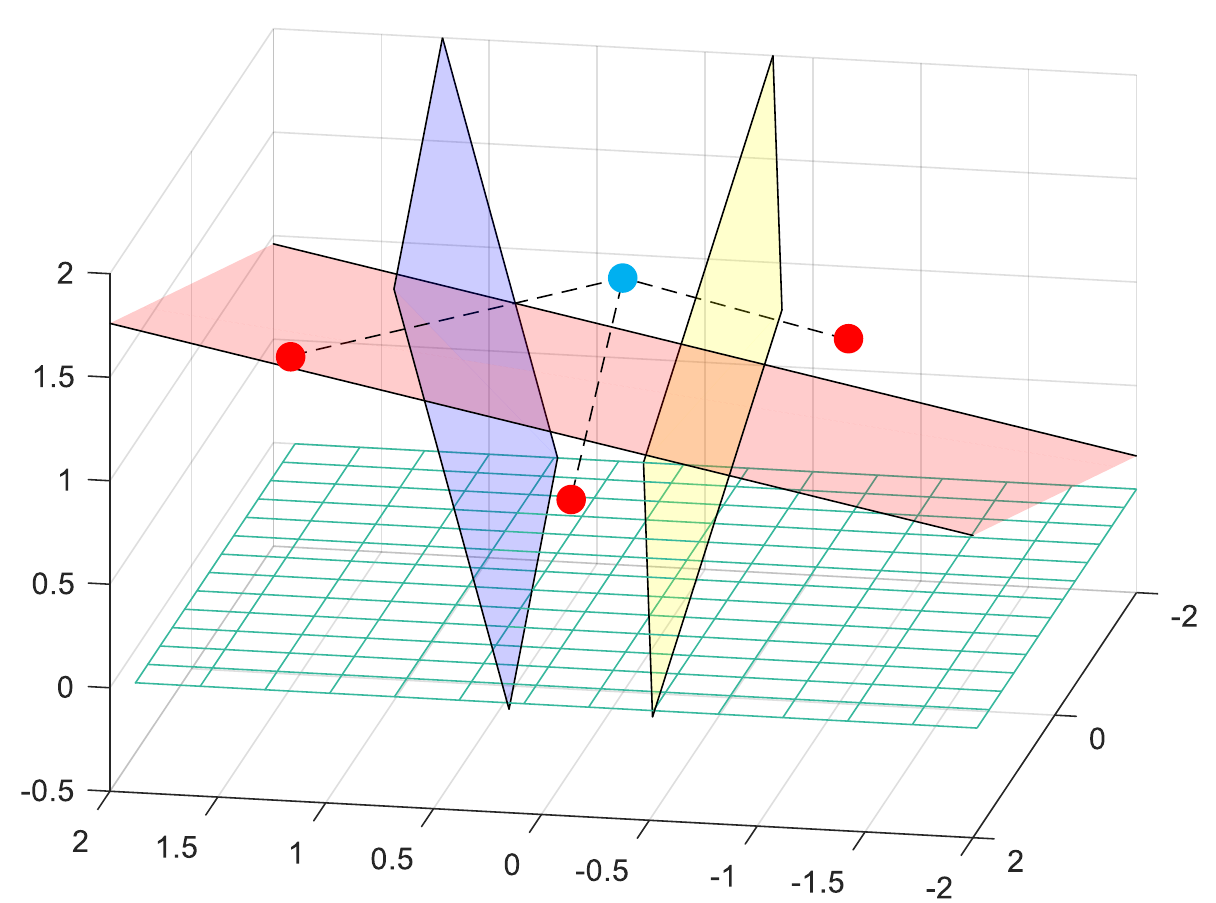}
\put(-163,87){$\scriptstyle{P_1}$}
\put(-104,53){$\scriptstyle{P_2}$}
\put(-57,87){$\scriptstyle{P_3}$}
\put(-100,99){$\scriptstyle{E}$}
\put(-131,104){$\scriptstyle{\mathscr{B}^1}$}
\put(-50,65){$\scriptstyle{\mathscr{B}^2}$}
\put(-83,96){$\scriptstyle{\mathscr{B}^3}$}
\put(-205,55){$\scriptstyle{z}$}
\put(-112,-2){$\scriptstyle{y}$}
\put(-16,20){$\scriptstyle{x}$}
\caption{The evasion space (ES) and the boundary of ES (BES) determined by three pursuers and one evader.\label{fig:88}} 
\end{figure}

Denote the ES and BES associated with $P_i$ and $E$ at time $t=0$ by $\mathscr{E}^i$ and $\mathscr{B}^i$ respectively. Thus, by definition, the ES and BES can be respectively given as follows:
\begin{equation}\label{AR1}\begin{aligned}
\mathscr{E}^{i}&=\big\{\mathbf{z}\in\mathbb{R}^3|\|\mathbf{z}-\mathbf{x}_E^0\|_2<\|\mathbf{z}-\mathbf{x}_{P_i}^0\|_2\big\}\\
\mathscr{B}^i&=\big\{\mathbf{z}\in\mathbb{R}^3|\|\mathbf{z}-\mathbf{x}_E^0\|_2=\|\mathbf{z}-\mathbf{x}_{P_i}^0\|_2\big\}.
\end{aligned}\end{equation}
As Fig. \ref{fig:88} shows, the ES $\mathscr{E}^i$ and BES $\mathscr{B}^i$ are a half-space and a plane respectively.

Let $\mathscr{E}^{i,j}$ and ${\mathscr{B}}^{i,j}$ denote the ES and BES determined by two pursuers $P_i,P_j$ and evader $E$ at time $t=0$ respectively, which can be obtained by definition as follows:
\begin{equation}\label{AR2}\begin{aligned}
\mathscr{E}^{i,j}=\mathscr{E}^i\cap\mathscr{E}^j,\qquad\mathscr{B}^{i,j}=\partial\mathscr{E}^{i,j}.
\end{aligned}\end{equation}

Similarly, the ES and BES determined by three pursuers $P_1,P_2,P_3$ and evader $E$ at time $t=0$ are denoted by $\mathscr{E}$ and $\mathscr{B}$ respectively, which can also be computed as follows:
\begin{equation}\label{ES3}\begin{aligned}
\mathscr{E}=\mathscr{E}^1\cap\mathscr{E}^2\cap\mathscr{E}^3,\qquad\mathscr{B}=\partial\mathscr{E}.
\end{aligned}\end{equation}
The illustration for ES $\mathscr{E}$ and BES $\mathscr{B}$ is depicted in Fig. \ref{fig:88}.

\section{Barrier and Winning Subspaces }\label{winningsubandbarriersec}
This section focuses on \autoref{kind}, namely, which team will win the game. Obviously, this is a game of kind, which provides a binary answer to the name of game winner, or no one can win the game.

Therefore, the primary goal of this section is to construct the barrier, and subsequently determine their winning subspaces.

Let $\mathcal{B}^i,\mathcal{W}^i_P$ and $\mathcal{W}^i_E$ denote the barrier, pursuer winning subspace and evader winning subspace determined by $P_i$ at time $t=0$ respectively. For two pursuers $P_i$ and $P_j$, let $\mathcal{B}^{i,j},\mathcal{W}^{i,j}_P$ and $\mathcal{W}^{i,j}_E$ denote  the associated barrier, pursuer winning subspace and evader winning subspace at time $t=0$ respectively. Let $\mathcal{B},\mathcal{W}_P$ and $\mathcal{W}_E$ respectively denote the barrier, pursuer winning subspace and evader winning subspace determined by three pursuers together at time $t=0$.

\subsection{One Pursuer Versus One Evader}\label{subsectionb1v1}
We first present the construction of barrier and winning subspaces for the case with one pursuer $P_i$ and one evader $E$, which will provide key insights into the barrier construction for the two-pursuer and three-pursuer scenarios.


\begin{lema}[One pursuer]\rm\label{1v1barrierlema2}
If the system (\ref{dynamics}) has only one pursuer $P_i$, the barrier $\mathcal{B}^i$ has two cases: if $z_{P_i}^0<0$, $\mathcal{B}^i=\big\{\mathbf{z}\in\mathbb{R}^3|x=x_{P_i}^0,y=y_{P_i}^0,z=-z_{P_i}^0\big\}$; otherwise, $\mathcal{B}^i=\emptyset$. Two winning subspaces $\mathcal{W}_P^i$ and $\mathcal{W}_E^i$ are respectively given as follows:
\begin{equation}\begin{aligned}\label{winningsubspaces1vs1v1}
\mathcal{W}_P^i&=\big\{\mathbf{z}\in\mathbb{R}^3|x=x_{P_i}^0,y=y_{P_i}^0,z>|z_{P_i}^0|\big\}\\
\mathcal{W}_E^i&=\Omega_{\rm play}\setminus(\mathcal{B}^i\cup\mathcal{W}_P^i).
\end{aligned}\end{equation}
\end{lema}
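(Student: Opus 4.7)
The plan is to reduce the lemma to a geometric question about the Apollonius plane $\mathscr{B}^i$. Since both $P_i$ and $E$ move at unit speed, a point $\mathbf{q}\in\mathbb{R}^3$ is reachable by $E$ strictly before $P_i$ iff $\mathbf{q}\in\mathscr{E}^i$, directly from (\ref{AR1}). I would first establish the dichotomy that $E$ wins iff $\mathscr{E}^i\cap\mathcal{T}\neq\emptyset$. For the easy direction, if $\mathbf{q}\in\mathscr{E}^i\cap\mathcal{T}$, $E$ moves in a straight line toward $\mathbf{q}$; convexity of the open half-space $\mathscr{E}^i$ keeps the entire evader trajectory inside $\mathscr{E}^i$, so at every intermediate time the evader's position is strictly closer to $\mathbf{x}_E^0$ than to $\mathbf{x}_{P_i}^0$, preventing any capture before $E$ crosses $\mathcal{T}$. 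For the converse, when $\mathscr{E}^i\cap\mathcal{T}=\emptyset$, the plane $\mathcal{T}$ lies in the closed half-space in which $P_i$ is at least as close as $E$; I would exhibit a reflection strategy through $\mathscr{B}^i$ ($P_i$'s velocity is the mirror of $E$'s velocity through $\mathscr{B}^i$) that keeps $P_i$'s position as the $\mathscr{B}^i$-reflection of $E$'s position, so that any attempt by $E$ to cross $\mathscr{B}^i$ (and a fortiori $\mathcal{T}$) leads to capture on or above $\mathcal{T}$.

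Next I would make the condition $\mathscr{E}^i\cap\mathcal{T}=\emptyset$ explicit. Because $\mathscr{E}^i$ is an open half-space bounded by the plane $\mathscr{B}^i$ orthogonal to $\mathbf{x}_E^0-\mathbf{x}_{P_i}^0$, its intersection with $\mathcal{T}=\{z=0\}$ is empty iff $\mathscr{B}^i$ is parallel to $\mathcal{T}$ and $\mathcal{T}$ lies on the pursuer side. Parallelism forces $\mathbf{x}_E^0-\mathbf{x}_{P_i}^0$ to be vertical, i.e., $x_E^0=x_{P_i}^0$ and $y_E^0=y_{P_i}^0$; with these alignments the side condition, obtained by squaring $\|\mathbf{q}-\mathbf{x}_E^0\|\geq\|\mathbf{q}-\mathbf{x}_{P_i}^0\|$ at $\mathbf{q}=(x_{P_i}^0,y_{P_i}^0,0)$, reduces to $(z_E^0)^2\geq(z_{P_i}^0)^2$, equivalently $z_E^0\geq|z_{P_i}^0|$ since $z_E^0>0$.

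Reading off the winning sets, strict inequality $z_E^0>|z_{P_i}^0|$ together with the $(x,y)$-alignment exactly gives the stated $\mathcal{W}_P^i$. The barrier is the equality locus $z_E^0=|z_{P_i}^0|$ under the same alignment; since $z_E^0>0$ and the initial positions are distinct, the subcase $z_{P_i}^0\geq 0$ would force $\mathbf{x}_E^0=\mathbf{x}_{P_i}^0$ and is therefore excluded, whereas $z_{P_i}^0<0$ yields the single point $(x_{P_i}^0,y_{P_i}^0,-z_{P_i}^0)$; the evader winning subspace is the complement $\Omega_{\rm play}\setminus(\mathcal{B}^i\cup\mathcal{W}_P^i)$. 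The main obstacle is the converse half of the first step, namely formally verifying that the reflection-through-$\mathscr{B}^i$ strategy beats any non-anticipative evader policy when $\mathscr{E}^i\cap\mathcal{T}=\emptyset$; once this is in hand, the rest follows from elementary plane geometry.
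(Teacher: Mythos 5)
Your proposal is correct and follows essentially the same route as the paper: both reduce the game to the geometry of the evasion space $\mathscr{E}^i$ and its bounding plane $\mathscr{B}^i$, identifying the barrier with the case $\mathscr{B}^i=\mathcal{T}$ and the pursuer-winning set with the case where $\mathscr{B}^i$ is parallel to $\mathcal{T}$ and contained in $\Omega_{\rm play}$. The only difference is that you supply the strategy-level justification (the straight-line evasion argument via convexity of $\mathscr{E}^i$, and the mirror strategy through $\mathscr{B}^i$ for the pursuer) that the paper's proof asserts implicitly.
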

\begin{proof} See Fig. \ref{fig:2}(a). Assume that $\mathbf{x}_E^0\in\mathcal{B}^i$. Thus, under $P_i$ and $E$'s optimal strategies, $E$ is captured by $P_i$ exactly when reaching $\mathcal{T}$. Since $\mathscr{B}^i$ is a plane, then $\mathscr{B}^i=\mathcal{T}$ must hold. If $z_{P_i}^0<0$, we can obtain that $\mathbf{x}_E^0$ and $\mathbf{x}_{P_i}^0$ is symmetric with respect to $\mathcal{T}$. If $z_{P_i}^0\ge0$, obviously, $\mathcal{B}^i$ is empty.

If $\mathbf{x}_E^0\in\mathcal{W}_P^i$, $\mathscr{B}^i$ does not intersect with $\mathcal{T}\cup\Omega_{\rm goal}$. Equivalently, $\mathscr{B}^i$ is parallel to $\mathcal{T}$ and lies in $\Omega_{\rm play}$. Thus, $\mathcal{W}_P^i$ is described as (\ref{winningsubspaces1vs1v1}) shows. 

Naturally, $\mathcal{W}_E^i$ is the complementary set of $\mathcal{B}^i\cup\mathcal{W}_P^i$ with respect to $\Omega_{\rm play}$, corresponding to the case in which $\mathscr{B}^i$ intersects with $\Omega_{\rm goal}$. Thus, we finish the proof.
\end{proof}

\begin{figure}\centering
\graphicspath{{figure_final/}}
\subfigure{
\includegraphics[width=41mm,height=32mm]{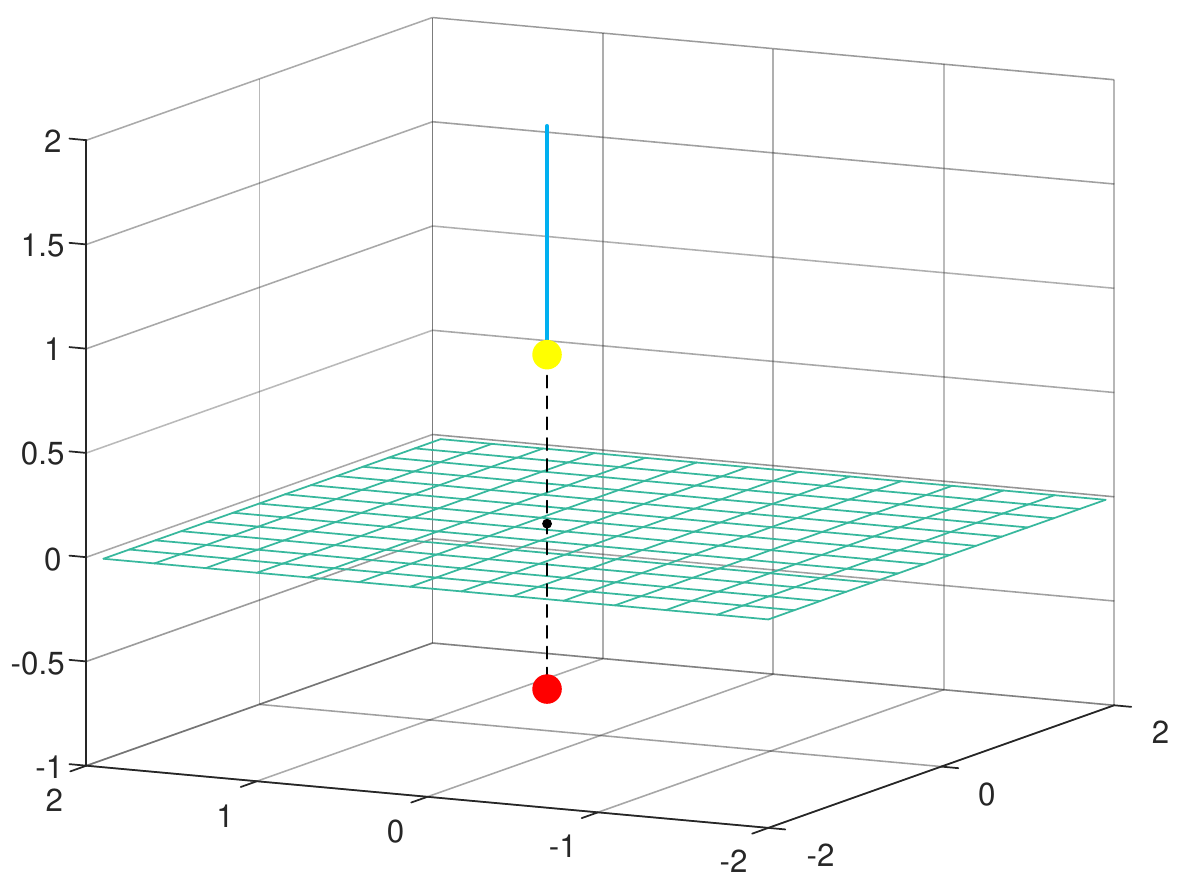}
\put(-32,35){$\scriptstyle{\mathcal{T}}$}
\put(-61,16){$\scriptstyle{P_i}$}
\put(-60,52){$\scriptstyle{\mathcal{B}^i}$}
\put(-61,70){$\scriptstyle{\mathcal{W}^i_P}$}
\put(-28,60){$\scriptstyle{\mathcal{W}^i_E}$}
\put(-121,45){$\scriptstyle{z}$}
\put(-77,0){$\scriptstyle{y}$}
\put(-18,4){$\scriptstyle{x}$}
\put(-70,-10){$\scriptstyle{(a)}$}
}
\subfigure{
\includegraphics[width=41mm,height=32mm]{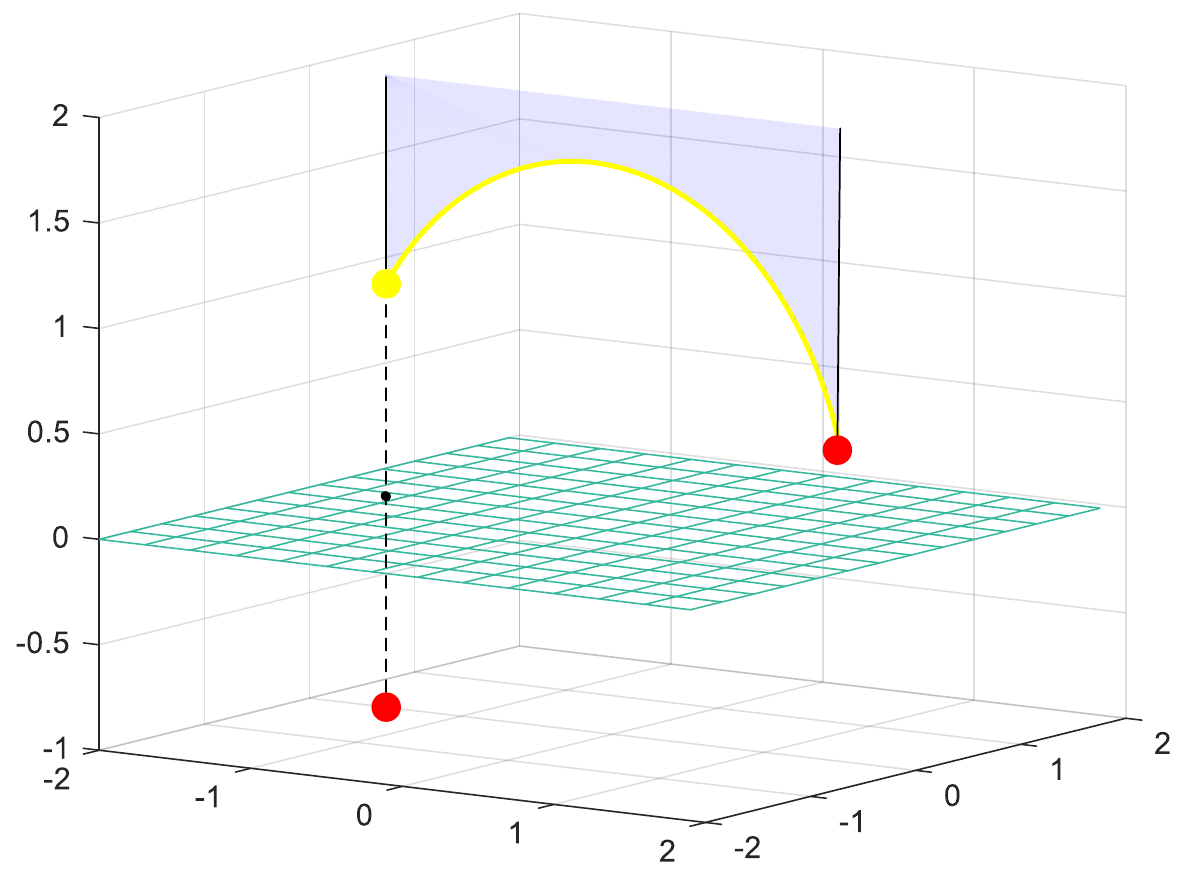}
\put(-48,31){$\scriptstyle{\mathcal{T}}$}
\put(-71,65){$\scriptstyle{\mathcal{B}^{i,j}}$}
\put(-102,50){$\scriptstyle{\mathcal{W}^{i,j}_E}$}
\put(-51,71){$\scriptstyle{\mathcal{W}^{i,j}_P}$}
\put(-90,15){$\scriptstyle{P_i}$}
\put(-33,43){$\scriptstyle{P_j}$}
\put(-121,45){$\scriptstyle{z}$}
\put(-72,0){$\scriptstyle{y}$}
\put(-14,5){$\scriptstyle{x}$}
\put(-70,-10){$\scriptstyle{(b)}$}
}\caption{The barrier and winning subspaces determined by: $(a)$ one pursuer; $(b)$ two active pursuers. The barrier $\mathcal{B}^i$ or $\mathcal{B}^{i,j}$ is in yellow, where $\mathcal{B}^i$ is a point or empty and $\mathcal{B}^{i,j}$ is the part of an arc. The pursuer winning subspace $\mathcal{W}_P^i$ or $\mathcal{W}^{i,j}_P$ is in blue, where $\mathcal{W}_P^i$ is a ray and $\mathcal{W}^{i,j}_P$ is the part of a plane. The evader winning subspace $\mathcal{W}_E^i$ or $\mathcal{W}^{i,j}_E$ is the remainder in $\Omega_{\rm play}$.\label{fig:2}}  
\end{figure}

\subsection{Two Pursuers Versus One Evader}\label{subsectionb2v1}
Consider two pursuers $P_i$ and $P_j$. As will be shown below, $\mathcal{B}^{i,j}$ has two types. The first one only depends on one of two pursuers, and the second one is related to both two pursuers. The conditions to distinguish them are as follows. For clarity, the pursuer which the barrier depends on, is called \emph{active pursuer}.

\begin{lema}[Classification condition]\rm\label{twotypeslema} The barrier $\mathcal{B}^{i,j}$ depends on both two pursuers $P_i$ and $P_j$, if and only if
\begin{equation}\label{onefromtwo}\begin{aligned}
(x_{P_i}^0-x_{P_j}^0)^2+(y_{P_i}^0-y_{P_j}^0)^2\neq0.
\end{aligned}\end{equation}
If (\ref{onefromtwo}) fails, $\mathcal{B}^{i,j}$ only depends on one puruser as follows: If $|z_{P_i}^0|<|z_{P_j}^0|$, then $\mathcal{B}^{i,j}=\mathcal{B}^i$, and if $|z_{P_i}^0|>|z_{P_j}^0|$, then $\mathcal{B}^{i,j}=\mathcal{B}^j$, and if $z_{P_i}^0=-z_{P_j}^0$, then $\mathcal{B}^{i,j}=\emptyset$.
\end{lema}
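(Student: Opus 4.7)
The plan is to reduce the two-pursuer barrier problem to a pointwise comparison of distances to the target plane. From (\ref{AR2}) we have $\mathscr{E}^{i,j}=\mathscr{E}^i\cap\mathscr{E}^j$, so an evader start $\mathbf{x}_E^0$ lies on $\mathcal{B}^{i,j}$ exactly when $\mathscr{E}^{i,j}$ is tangent to $\mathcal{T}$, which, using (\ref{AR1}), amounts to $\inf_{\mathbf{q}\in\mathcal{T}}\bigl[\min(\|\mathbf{q}-\mathbf{x}_{P_i}^0\|_2,\|\mathbf{q}-\mathbf{x}_{P_j}^0\|_2)-\|\mathbf{q}-\mathbf{x}_E^0\|_2\bigr]=0$. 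In this reformulation, asserting that $\mathcal{B}^{i,j}$ \emph{depends only on} $P_i$ means the inner $\min$ is always attained by $P_i$, so the two-pursuer problem collapses to the one-pursuer problem already handled in \autoref{1v1barrierlema2}.

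Assuming condition~(\ref{onefromtwo}) fails, so that $x_{P_i}^0=x_{P_j}^0$ and $y_{P_i}^0=y_{P_j}^0$, a direct computation for $\mathbf{q}=(q_x,q_y,0)\in\mathcal{T}$ and $k\in\{i,j\}$ gives $\|\mathbf{q}-\mathbf{x}_{P_k}^0\|_2^2=(q_x-x_{P_i}^0)^2+(q_y-y_{P_i}^0)^2+(z_{P_k}^0)^2$, so the pursuer with the smaller $|z_{P_k}^0|$ is closer than the other to every point of $\mathcal{T}$, uniformly in $\mathbf{q}$. The inner $\min$ is therefore attained by a single pursuer throughout, and invoking \autoref{1v1barrierlema2} yields $\mathcal{B}^{i,j}=\mathcal{B}^i$ when $|z_{P_i}^0|<|z_{P_j}^0|$ and the symmetric statement when the inequality is reversed. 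In the borderline sub-case $z_{P_i}^0=-z_{P_j}^0$ (the only way $|z_{P_i}^0|=|z_{P_j}^0|$ can hold, since $\mathbf{x}_{P_i}^0\neq\mathbf{x}_{P_j}^0$), the equivalent single pursuer lies in $\Omega_{\rm goal}$, and \autoref{1v1barrierlema2} predicts a single barrier point $(x_{P_i}^0,y_{P_i}^0,|z_{P_i}^0|)$, which happens to coincide with the initial position of the other pursuer. The distinct-starts assumption forbids $\mathbf{x}_E^0$ from sitting there, so $\mathcal{B}^{i,j}=\emptyset$.

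For the converse I plan to argue that, whenever~(\ref{onefromtwo}) does hold, neither squared-distance $\|\mathbf{q}-\mathbf{x}_{P_k}^0\|_2^2$ can dominate the other uniformly over $\mathcal{T}$: evaluating at $\mathbf{q}=(x_{P_i}^0,y_{P_i}^0,0)$ gives $(z_{P_i}^0)^2$ for $P_i$ versus a strictly larger value for $P_j$, and vice versa at $\mathbf{q}=(x_{P_j}^0,y_{P_j}^0,0)$. The pointwise minimum therefore switches from one pursuer to the other across $\mathcal{T}$, so any description of $\partial\mathscr{E}^{i,j}\cap\mathcal{T}$ is forced to involve both half-spaces, and both pursuers must be active in the construction of the barrier. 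The main obstacle I anticipate is precisely this converse: formalising ``depends on both'' rigorously really requires the explicit barrier formula for the generic two-pursuer case that the subsequent results supply, whereas the failure of~(\ref{onefromtwo}) is dispatched cleanly by the uniform distance comparison above.
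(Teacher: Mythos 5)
Your overall strategy is the same as the paper's: decide, for each point $\mathbf{q}\in\mathcal{T}$, which pursuer is closer, and declare the barrier to depend on both pursuers exactly when neither one dominates on all of $\mathcal{T}$. Your treatment of the case where (\ref{onefromtwo}) fails is correct and in fact more explicit than the paper's: the squared distances to any $\mathbf{q}\in\mathcal{T}$ differ only in the $(z_{P_k}^0)^2$ term, so the pursuer with smaller $|z_{P_k}^0|$ dominates uniformly and \autoref{1v1barrierlema2} applies; your handling of the symmetric sub-case $z_{P_i}^0=-z_{P_j}^0$ supplies a detail the paper leaves implicit. One slip in the setup: membership in $\mathcal{B}^{i,j}$ corresponds to $\sup_{\mathbf{q}\in\mathcal{T}}\big[\min_k\|\mathbf{q}-\mathbf{x}_{P_k}^0\|_2-\|\mathbf{q}-\mathbf{x}_E^0\|_2\big]=0$, not $\inf=0$; if the infimum were zero the evader would be weakly closer to \emph{every} point of $\mathcal{T}$, which places it deep inside $\mathcal{W}_E^{i,j}$. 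This does not damage the rest of the argument, which only concerns which pursuer attains the inner minimum.

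The genuine gap is in the converse. You claim that when (\ref{onefromtwo}) holds, evaluating at $\mathbf{q}=(x_{P_i}^0,y_{P_i}^0,0)$ gives $(z_{P_i}^0)^2$ for $P_i$ ``versus a strictly larger value for $P_j$.'' The value for $P_j$ is $(x_{P_i}^0-x_{P_j}^0)^2+(y_{P_i}^0-y_{P_j}^0)^2+(z_{P_j}^0)^2$, which need not exceed $(z_{P_i}^0)^2$: for $\mathbf{x}_{P_i}^0=(0,0,10)$ and $\mathbf{x}_{P_j}^0=(1,0,0)$, the pursuer $P_j$ is strictly closer to $P_i$'s own projection ($1<100$), so this test point does not exhibit the switch. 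The intended conclusion is still true, and the repair is short: restricted to $\mathcal{T}$, the difference $\|\mathbf{q}-\mathbf{x}_{P_j}^0\|_2^2-\|\mathbf{q}-\mathbf{x}_{P_i}^0\|_2^2$ is an affine function of $(q_x,q_y)$ with linear part $2(x_{P_i}^0-x_{P_j}^0)q_x+2(y_{P_i}^0-y_{P_j}^0)q_y$, and condition (\ref{onefromtwo}) says precisely that this linear part is nonzero, so the difference takes both signs on $\mathcal{T}$ (move far enough along $\pm(\bar{\mathbf{x}}_{P_i}^0-\bar{\mathbf{x}}_{P_j}^0)$) and each pursuer strictly wins the race to a nonempty open half-plane of $\mathcal{T}$. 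With that correction your proof coincides with the paper's, which asserts the same dichotomy without computation.
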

\begin{proof} Note that if $P_i$ can reach any point in $\mathcal{T}$ before $P_j$, then the barrier $\mathcal{B}^{i,j}$ is determined by $P_i$ alone. Thus, the necessary and sufficient condition to determine that $\mathcal{B}^{i,j}$ depends on both two pursuers, is that for each pursuer, there exists at least one point in $\mathcal{T}$ that it can reach before the other pursuer, implying that (\ref{onefromtwo}) holds.

Conversely, if (\ref{onefromtwo}) fails, that is, $x_{P_i}^0=x_{P_j}^0$ and $y_{P_i}^0=y_{P_j}^0$, then the pursuer which is closer to $\mathcal{T}$ determine the barrier $\mathcal{B}^{i,j}$ alone. Additionally, if two pursuers are symmetric with respect to $\mathcal{T}$, it follows from \autoref{1v1barrierlema2} that $\mathcal{B}^{i,j}=\emptyset$. 
\end{proof}

If $\mathcal{B}^{i,j}$ depends on only one pursuer, the results can be obtained from Section \ref{subsectionb1v1}. Thus, we will focus on the case of two active pursuers in which both two pursuers make contributions to the construction of barrier.

Denote by $\bar{\mathbf{x}}_{P_i}^0$ as the projection of $P_i$'s initial position $\mathbf{x}_{P_i}^0$ into $\mathcal{T}$. Thus, $\bar{\mathbf{x}}_{P_i}^0=(x_{P_i}^0,y_{P_i}^0,0)$.

Denote by $\mathcal{L}_{i,j}$ as the straight line in $\mathcal{T}$ and through $\bar{\mathbf{x}}_{P_i}^0$ and $\bar{\mathbf{x}}_{P_j}^0$, and denote the vertical plane through $\mathbf{x}_{P_i}^0,\mathbf{x}_{P_j}^0$ and $\mathcal{L}_{i,j}$ by $\mathcal{P}_{i,j}$. Define $\bm{c}_{i,j}$ as the point in $\mathcal{T}$ that $P_i$ and $P_j$ can reach at the same time. 

\begin{thom}[Two active pursuers]\label{thom2v1equal1}\rm
If the system (\ref{dynamics}) has only two pursuers $P_i$ and $P_j$, and suppose that (\ref{onefromtwo}) is true, then the barrier $\mathcal{B}^{i,j}$ is given by $\cup_{m=1}^3\mathcal{B}^{i,j}_m$ as follows:\begin{equation}\begin{aligned}\label{barrier2v1equal1}
&\mathcal{B}^{i,j}_1=\mathcal{B}^i,\qquad\mathcal{B}^{i,j}_2=\mathcal{B}^j\\
&\mathcal{B}^{i,j}_3=\big\{\mathbf{z}\in\mathbb{R}^3|\|\mathbf{z}-\bm{c}_{i,j}\|_2=\|\mathbf{x}_{P_i}^0-\bm{c}_{i,j}\|_2,\\
&\ \ \ x=\beta x_{P_i}^0+(1-\beta)x_{P_j}^0,y=\beta y_{P_i}^0+(1-\beta)y_{P_j}^0,\\
&\qquad\qquad\qquad\qquad\qquad\qquad\qquad\beta\in(0,1),z>0\big\}
\end{aligned}\end{equation}
and two winning subspaces $\mathcal{W}_P^{i,j}$ and $\mathcal{W}_E^{i,j}$ are respectively given by
\begin{equation}\begin{aligned}\label{winningregions2v1equal1}
&\mathcal{W}^{i,j}_P=\{\mathbf{z}\in\mathbb{R}^3|\|\mathbf{z}-\bm{c}_{i,j}\|_2>\|\mathbf{x}_{P_i}^0-\bm{c}_{i,j}\|_2,\\
&\ \ \ x=\beta x_{P_i}^0+(1-\beta)x_{P_j}^0,y=\beta y_{P_i}^0+(1-\beta)y_{P_j}^0,\\
&\qquad\qquad\qquad\qquad\qquad\qquad\qquad\beta\in[0,1],z>0\}\\
&\mathcal{W}_E^{i,j}=\Omega_{\rm play}\setminus(\mathcal{B}^{i,j}\cup\mathcal{W}_P^{i,j}).
\end{aligned}\end{equation}
\end{thom}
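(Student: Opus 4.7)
The plan is to reduce the game-of-kind to a purely geometric tangency problem for the evasion space, and then to exhaust the geometric cases. Because all four players have the same unit speed, the evader can safely reach a target point $q\in\mathcal{T}$ if and only if $q\in\mathscr{E}^{i,j}=\mathscr{E}^i\cap\mathscr{E}^j$. Hence $E$ wins iff $\mathscr{E}^{i,j}\cap\mathcal{T}\neq\emptyset$, the pursuers win iff $\overline{\mathscr{E}^{i,j}}\cap\mathcal{T}=\emptyset$, and the barrier $\mathcal{B}^{i,j}$ is precisely the locus of $\mathbf{x}_E^0\in\Omega_{\rm play}$ for which $\overline{\mathscr{E}^{i,j}}$ touches $\mathcal{T}$ without its interior piercing $\mathcal{T}$. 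The existence of pursuit and evasion strategies actually realising this tangency follows from the standard bisector-based barrier strategy under the non-anticipative information structure, so I would cite this and concentrate on characterising the tangency set.

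Next I would study the wedge $\overline{\mathscr{E}^{i,j}}$, an intersection of two closed half-spaces bounded by $\mathscr{B}^i$ and $\mathscr{B}^j$. Its section by $\mathcal{T}$ is generically a two-dimensional angular region; the boundary case requires this region to collapse to lower dimension, which happens exactly when the traces of $\mathscr{B}^i$ and $\mathscr{B}^j$ on $\mathcal{T}$ are parallel. Writing these traces as $(\mathbf{x}_E^0-\mathbf{x}_{P_k}^0)_{xy}\cdot(x,y)=\tfrac{1}{2}(\|\mathbf{x}_E^0\|_2^2-\|\mathbf{x}_{P_k}^0\|_2^2)$ for $k=i,j$, parallelism amounts to collinearity of the horizontal projections of $\mathbf{x}_E^0,\mathbf{x}_{P_i}^0,\mathbf{x}_{P_j}^0$, i.e., $\mathbf{x}_E^0\in\mathcal{P}_{i,j}$. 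Within $\mathcal{P}_{i,j}$ the analysis splits: either a trace degenerates to all of $\mathcal{T}$ (some bisector coincides with $\mathcal{T}$), which by \autoref{1v1barrierlema2} recovers $\mathcal{B}^{i,j}_1=\mathcal{B}^i$ and $\mathcal{B}^{i,j}_2=\mathcal{B}^j$; or both traces are ordinary lines forced to coincide, producing the new component $\mathcal{B}^{i,j}_3$.

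For the new component, matching both the direction and the offset of the two coinciding traces forces their common point on $\mathcal{L}_{i,j}$ to be equidistant from $\mathbf{x}_{P_i}^0$ and $\mathbf{x}_{P_j}^0$, i.e., to be $\bm{c}_{i,j}$; the offset equation then collapses into the sphere condition $\|\mathbf{x}_E^0-\bm{c}_{i,j}\|_2=\|\mathbf{x}_{P_i}^0-\bm{c}_{i,j}\|_2$. Combined with $\mathbf{x}_E^0\in\mathcal{P}_{i,j}$, $z>0$, and $\beta\in(0,1)$ (excluding endpoints that coincide with $\mathcal{B}^i$ or $\mathcal{B}^j$), this yields exactly the arc in~(\ref{barrier2v1equal1}). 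The winning-subspace formulas follow by a perturbation argument within $\mathcal{P}_{i,j}$: moving $\mathbf{x}_E^0$ radially outward from $\bm{c}_{i,j}$ keeps the traces parallel but lifts the wedge off $\mathcal{T}$, yielding $\mathcal{W}_P^{i,j}$ in~(\ref{winningregions2v1equal1}), and $\mathcal{W}_E^{i,j}$ is the remainder of $\Omega_{\rm play}$.

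The main obstacle is the necessity direction $\mathbf{x}_E^0\in\mathcal{P}_{i,j}$: I have to rule out that some $\mathbf{x}_E^0$ off the vertical plane still produces a $\mathcal{T}$-tangent wedge. The argument is that non-collinearity of the three horizontal projections forces the two bisector traces in $\mathcal{T}$ to meet at a strictly positive angle, so the open angular region between them contains interior points of $\mathcal{T}$ and hence a safe target for $E$. A secondary care-point is the consistent gluing of the three components at $\beta\in\{0,1\}$: one has to verify that the sphere $\|\mathbf{z}-\bm{c}_{i,j}\|_2=\|\mathbf{x}_{P_i}^0-\bm{c}_{i,j}\|_2$ meets the vertical lines above $\mathbf{x}_{P_i}^0$ and $\mathbf{x}_{P_j}^0$ exactly at the reflection points appearing in \autoref{1v1barrierlema2}, so that $\cup_{m=1}^3\mathcal{B}^{i,j}_m$ forms the complete barrier without redundancy or gaps.
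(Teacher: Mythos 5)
Your proposal is correct and follows essentially the same route as the paper: both characterize $\mathcal{B}^{i,j}$ through the contact of the evasion-space wedge $\overline{\mathscr{E}^i\cap\mathscr{E}^j}$ with $\mathcal{T}$ and split into the three cases according to whether the contact occurs on $\mathscr{B}^i$ only, on $\mathscr{B}^j$ only, or on $\mathscr{B}^i\cap\mathscr{B}^j$. Your trace-parallelism argument for why $\mathbf{x}_E^0$ must lie in $\mathcal{P}_{i,j}$ and the algebraic derivation of the circle centered at $\bm{c}_{i,j}$ merely make explicit steps that the paper's proof asserts without detail.
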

\begin{proof}See Fig. \ref{fig:2}(b). Assume $\mathbf{x}_E^0\in\mathcal{B}^{i,j}$. Thus, $\mathscr{B}^{i,j}$ intersects with $\mathcal{T}$ while does not intersect with $\Omega_{\rm goal}$. Note that $\mathscr{B}^{i,j}$ is the boundary of the intersection set $\mathscr{E}^i\cap\mathscr{E}^j$. Then, it follows from (\ref{onefromtwo}) that the set $\mathscr{B}^i\cap\mathscr{B}^j$ is nonempty.

If the intersection points between $\mathscr{B}^{i,j}$ and $\mathcal{T}$ only belong to $\mathscr{B}^i$, thus $\mathscr{B}^i$ coincides with $\mathcal{T}$. Denote this part of $\mathcal{B}^{i,j}$ by $\mathcal{B}^{i,j}_1$, and \autoref{1v1barrierlema2} leads to $\mathcal{B}^{i,j}_1=\mathcal{B}^i$. Similarly, if these intersection points only belong to $\mathscr{B}^j$, $\mathcal{B}^{i,j}_2=\mathcal{B}^j$ is derived.

For the remainder, when $\mathscr{B}^{i,j}$  intersects with $\mathcal{T}$ at $\mathscr{B}^i\cap\mathscr{B}^j$, then $\mathbf{x}_E^0$ must lie in the vertical plane $\mathcal{P}_{i,j}$. Furthermore, $\mathbf{x}_E^0$ also should lie on the circle of radius $\|\mathbf{x}_{P_i}^0-\bm{c}_{i,j}\|_2$ centered at $\bm{c}_{i,j}$ in this vertical plane. Also note that $\mathbf{x}_E^0$'s projection in $\mathcal{T}$ should lie between 
$\bar{\mathbf{x}}_{P_i}^0$ and $\bar{\mathbf{x}}_{P_j}^0$ along $\mathcal{L}_{i,j}$. Denote this part of $\mathcal{B}^{i,j}$ by $\mathcal{B}^{i,j}_3$. Thus, $\mathcal{B}^{i,j}_3$ can be mathematically described as (\ref{barrier2v1equal1}) shows. 

As for two winning subspaces, attention will be focused on $\mathcal{W}_P^{i,j}$, and $\mathcal{W}_E^{i,j}$ is naturally the remainder as (\ref{winningregions2v1equal1}) shows. 

Assume that $\mathbf{x}_E^0\in\mathcal{W}_P^{i,j}$. Thus, $\mathscr{B}^{i,j}$ will not intersect with the set $\mathcal{T}\cup\Omega_{\rm goal}$. It can be observed that in this case $\mathbf{x}_E^0$ must lie in the vertical plane $\mathcal{P}_{i,j}$; otherwise, $\mathscr{B}^{i,j}$ intersects with the set $\mathcal{T}\cup\Omega_{\rm goal}$. Additionally, $\mathbf{x}_E^0$ also should lie ouside the circle on which the barrier lies, and its projection in $\mathcal{T}$ should lie between $\bar{\mathbf{x}}_{P_i}^0$ and $\bar{\mathbf{x}}_{P_j}^0$ along $\mathcal{L}_{i,j}$. In addition, $\mathbf{x}_E^0\in\mathcal{W}_P^i$ or $\mathbf{x}_E^0\in\mathcal{W}_P^j$ is satisfactory. Thus, $\mathcal{W}_P^{i,j}$ is as (\ref{winningregions2v1equal1}) shows.
\end{proof}


\subsection{Three Pursuers Versus One Evader}\label{subsectionb3v1}
As will be shown below, $\mathcal{B}$ has three types. The first one is only dependent on one of three pursuers, the second one is associated with two of them, and the third one depends on all of them. The conditions to distinguish three types are as follows. Define two index sets $\mathcal{I}_i=\{1,2,3\}\setminus\{i\}$ and $\mathcal{I}_{i,j}=\{1,2,3\}\setminus\{i,j\}$. Denote by $\hat{\mathbf{x}}_{P_i}^0$ the symmetric point of $\mathbf{x}_{P_i}^0$ with respect to $\mathcal{T}$. 

\begin{figure}\centering
\graphicspath{{figure_final/}}
\includegraphics[width=70mm,height=45mm]{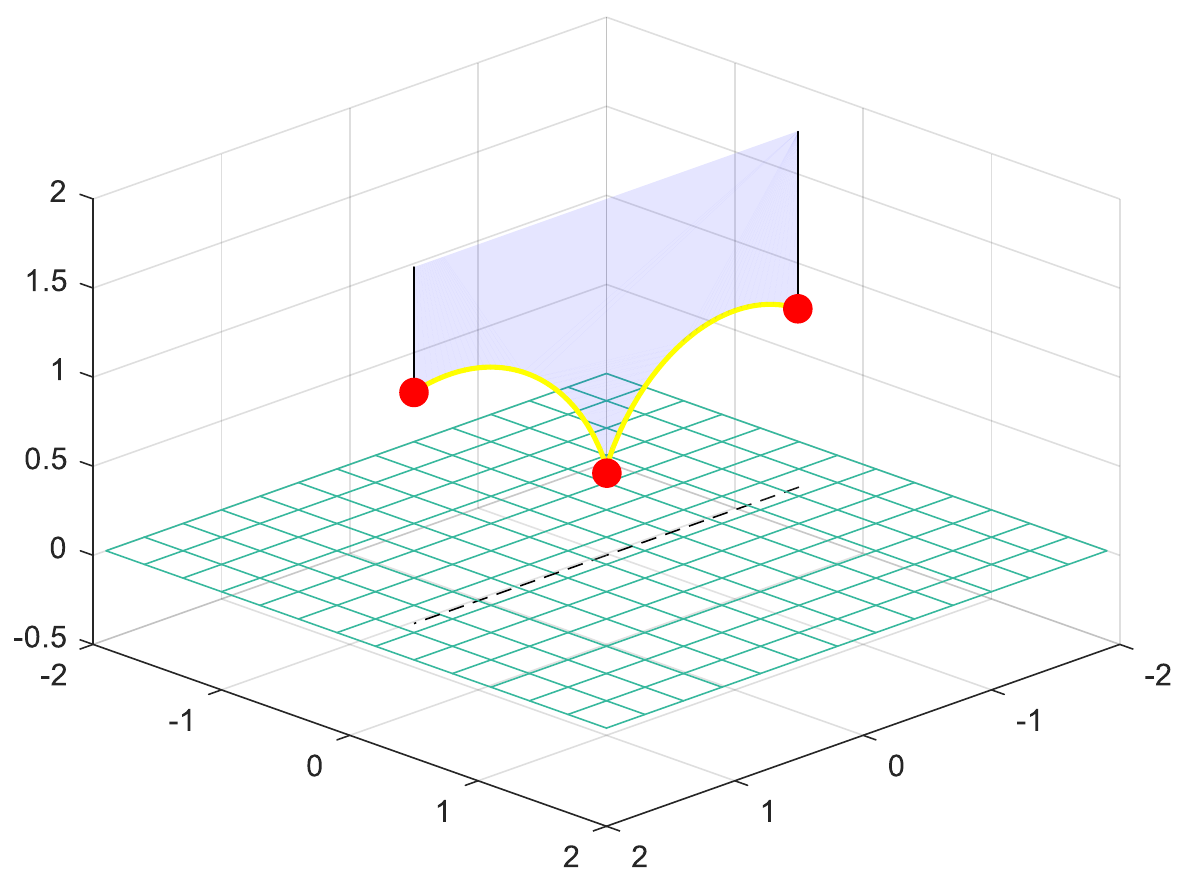}
\put(-143,70){$\scriptstyle{P_1}$}
\put(-95,52){$\scriptstyle{P_2}$}
\put(-63,85){$\scriptstyle{P_3}$}
\put(-86,70){$\scriptstyle{\mathcal{B}}$}
\put(-104,83){$\scriptstyle{\mathcal{W}_P}$}
\put(-45,70){$\scriptstyle{\mathcal{W}_E}$}
\put(-45,45){$\scriptstyle{\mathcal{T}}$}
\put(-204,62){$\scriptstyle{z}$}
\put(-150,7){$\scriptstyle{y}$}
\put(-50,7){$\scriptstyle{x}$}
\caption{The barrier and winning subspaces for three collinear-projection active pursuers. The barrier $\mathcal{B}$ in yellow consists of two arcs, and the pursuer winning subspace $\mathcal{W}_P$ in blue is the part of a plane. The evader winning subspace $\mathcal{W}_E$ is the remainder in $\Omega_{\rm play}$.  \label{fig:3}} 
\end{figure}

\begin{lema}[Classification condition]\rm\label{threetypeslema} The barrier $\mathcal{B}$ only depends on the pursuer $P_i$, namely, $\mathcal{B}=\mathcal{B}^i$, if and only if 
\begin{equation}\label{onefromthree}\begin{aligned}
x_{P_i}^0=x_{P_k}^0,\quad y_{P_i}^0=y_{P_k}^0,\quad |z_{P_i}^0|<|z_{P_k}^0|
\end{aligned}\end{equation}
holds for all $k\in\mathcal{I}_i$. The barrier $\mathcal{B}$ depends on only two pursuers $P_i$ and $P_j$, namely, $\mathcal{B}=\mathcal{B}^{i,j}$, if and only if 
\begin{equation}\label{twofromthree}\begin{aligned}
(x_{P_i}^0-x_{P_j}^0)^2+(y_{P_i}^0-y_{P_j}^0)^2\neq0,\{\mathbf{x}_{P_k}^0,\hat{\mathbf{x}}_{P_k}^0\}\cap\mathcal{W}_P^{i,j}\neq\emptyset\\
\text{or }x_{P_i}^0=x_{P_k}^0,y_{P_i}^0=y_{P_k}^0,z_{P_i}^0=-z_{P_k}^0>0,\mathbf{x}_{P_i}^0\notin\mathcal{W}_P^{j}
\end{aligned}\end{equation}
where $k\in\mathcal{I}_{i,j}$. The barrier $\mathcal{B}$ depends on all three pursuers, if and only if 
\begin{equation}\label{threefromthree}\begin{aligned}
(x_{P_i}^0-x_{P_j}^0)^2+(y_{P_i}^0-y_{P_j}^0)^2\neq0,\{\mathbf{x}_{P_k}^0,\hat{\mathbf{x}}_{P_k}^0\}\cap\mathcal{W}_P^{i,j}=\emptyset
\end{aligned}\end{equation}
holds for all $i,j\in\{1,2,3\},i\neq j$ and $k\in\mathcal{I}_{i,j}$. 
\end{lema}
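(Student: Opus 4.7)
The plan is to reduce the three-pursuer classification to the pairwise case handled in \autoref{twotypeslema}, using the principle that a pursuer $P_\ell$ fails to contribute to $\mathcal{B}$ precisely when, for every $\mathbf{q}\in\mathcal{T}$, at least one other pursuer is weakly closer to $\mathbf{q}$ than $P_\ell$ is. The characterization of $\mathcal{W}_P^{i,j}$ in \autoref{thom2v1equal1} already encodes exactly such a domination statement: by construction $\mathcal{W}_P^{i,j}$ consists of the points $\mathbf{z}\in\Omega_{\rm play}$ that cannot reach any point of $\mathcal{T}$ strictly before the closer of $P_i$ and $P_j$. Because $\mathbf{x}_{P_k}^0$ may lie in $\Omega_{\rm goal}$, I would use the reflection $\hat{\mathbf{x}}_{P_k}^0$; since Euclidean distances from any point of $\mathcal{T}$ to $\mathbf{x}_{P_k}^0$ and to $\hat{\mathbf{x}}_{P_k}^0$ coincide, the statement ``$P_k$ is dominated on $\mathcal{T}$ by $\{P_i,P_j\}$'' is equivalent to $\{\mathbf{x}_{P_k}^0,\hat{\mathbf{x}}_{P_k}^0\}\cap\mathcal{W}_P^{i,j}\neq\emptyset$.

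With that equivalence in hand, I would treat the three cases as follows. For (\ref{onefromthree}) I invoke \autoref{twotypeslema} twice, once on the pair $(P_i,P_j)$ and once on $(P_i,P_k)$: the three-pursuer barrier collapses to $\mathcal{B}^i$ iff both pairwise barriers already do, which by \autoref{twotypeslema} is exactly the stated condition for each $k\in\mathcal{I}_i$. For (\ref{twofromthree}) I split into a generic subcase, in which $(P_i,P_j)$ are mutually active (so (\ref{onefromtwo}) holds) and $P_k$ is absorbed via the $\mathcal{W}_P^{i,j}$ test above; and a degenerate subcase in which $P_i$ and $P_k$ share a $\mathcal{T}$-projection and are mirror-symmetric across $\mathcal{T}$, where \autoref{twotypeslema} yields $\mathcal{B}^{i,k}=\emptyset$ and the auxiliary clause $\mathbf{x}_{P_i}^0\notin\mathcal{W}_P^j$ is precisely what prevents a further collapse to $\mathcal{B}^j$. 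Case (\ref{threefromthree}) then follows by logical negation of the previous two over all admissible index choices.

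The hardest step, I expect, is making the reflection/domination equivalence rigorous and exhaustive. Matching a three-dimensional membership condition in $\mathcal{W}_P^{i,j}$---which by \autoref{thom2v1equal1} is restricted to the vertical plane $\mathcal{P}_{i,j}$ and bounded by a circular arc over $\bm{c}_{i,j}$---with a two-dimensional Voronoi-type statement on $\mathcal{T}$ requires careful routing of every boundary case (equality in a pursuer-distance comparison, coincident projections, and mirror-symmetric placements across $\mathcal{T}$). The disjunctive clause in (\ref{twofromthree}) is precisely where this bookkeeping becomes delicate, since the reflection identification $\hat{\mathbf{x}}_{P_k}^0=\mathbf{x}_{P_i}^0$ makes the first clause trivially true while the dominance on $\mathcal{T}$ may still fail. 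Once those degeneracies are handled, the rest of the argument is essentially organizational, amounting to distributing the three dominance patterns over the index set $\{1,2,3\}$ and checking that the cases are mutually exclusive and jointly exhaustive.
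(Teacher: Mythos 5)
Your proposal follows essentially the same route as the paper's proof: you characterize each case by whether a pursuer is dominated on $\mathcal{T}$ by the others, reduce the pairwise comparisons to \autoref{twotypeslema}, encode the domination of $P_k$ by $\{P_i,P_j\}$ as membership of $\mathbf{x}_{P_k}^0$ or its reflection $\hat{\mathbf{x}}_{P_k}^0$ in $\mathcal{W}_P^{i,j}$, treat the mirror-symmetric degeneracy via the clause $\mathbf{x}_{P_i}^0\notin\mathcal{W}_P^{j}$, and obtain (\ref{threefromthree}) by negation. This matches the paper's argument (which is stated more tersely), and your explicit justification of the reflection/distance-equality step is a sound elaboration rather than a departure.
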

\begin{proof}Note that the barrier $\mathcal{B}$ only depends on the pursuer $P_i$, if and only if $P_i$ can reach any point in $\mathcal{T}$ before the other two pursuers. Thus, (\ref{onefromthree}) holds for all $k\in\mathcal{I}_i$.

Consider the case in which the barrier $\mathcal{B}$ only depends on two pursuers $P_i$ and $P_j$. Firstly, \autoref{twotypeslema} implies that (\ref{onefromtwo}) is true, and for $k\in\mathcal{I}_{i,j}$, there exist no points in $\mathcal{T}$ that the puruser $P_k$ can reach before both $P_i$ and $P_j$, meaning that $\mathbf{x}_{P_k}^0\in\mathcal{W}_P^{i,j}$ or $\hat{\mathbf{x}}_{P_k}^0\in\mathcal{W}_P^{i,j}$. Secondly, when $P_{i}$ and $P_k$ are symmetric with respect to $\mathcal{T}$, $P_i$ should lie in $\Omega_{\rm play}$ and out of $\mathcal{W}_P^j$. Thus, the condition (\ref{twofromthree}) is obtained.

As for the case in which the barrier $\mathcal{B}$ depends on all three pursuers, for each pursuer, there exists at least one point in $\mathcal{T}$ that it can reach prior to the other two pursuers. Thus, it follows from \autoref{twotypeslema} that (\ref{onefromtwo}) holds for all $i,j\in\{1,2,3\}$ and $i\neq j$. Additionally, for $k\in\mathcal{I}_{i,j}$, $\mathbf{x}_{P_k}^0\notin\mathcal{W}_P^{i,j}$ and $\hat{\mathbf{x}}_{P_k}^0\notin\mathcal{W}_P^{i,j}$ are satisfied.
\end{proof}

\begin{figure}\centering
\graphicspath{{figure_final/}}
\includegraphics[width=70mm,height=45mm]{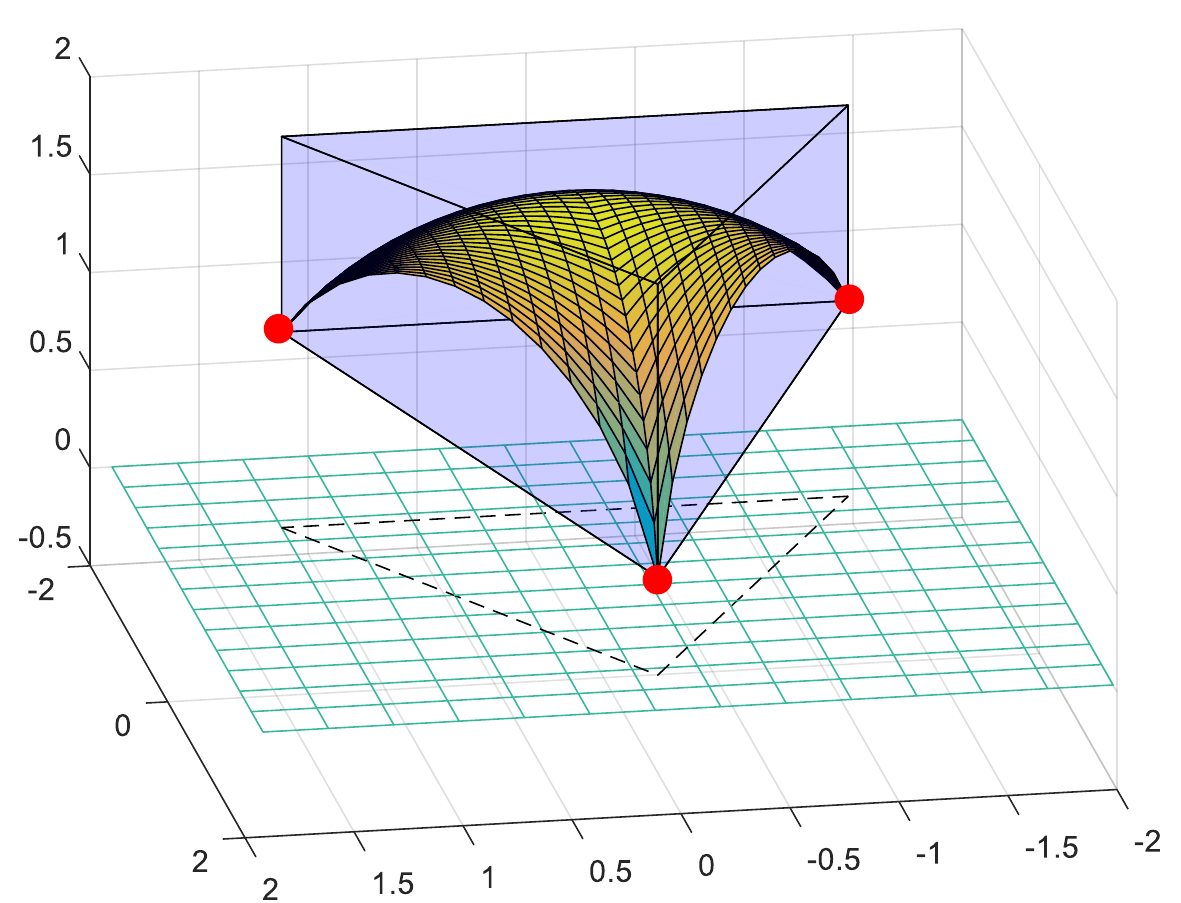}
\put(-165,80){$\scriptstyle{P_1}$}
\put(-84,45){$\scriptstyle{P_2}$}
\put(-52,82){$\scriptstyle{P_3}$}
\put(-108,45){$\scriptstyle{S}$}
\put(-138,40){$\scriptstyle{\mathcal{L}_{12}}$}
\put(-135,57){$\scriptstyle{\mathcal{L}_{13}}$}
\put(-65,45){$\scriptstyle{\mathcal{L}_{23}}$}
\put(-118,76){$\scriptstyle{\mathcal{B}}$}
\put(-100,104){$\scriptstyle{\mathcal{W}_P}$}
\put(-35,80){$\scriptstyle{\mathcal{W}_E}$}
\put(-30,35){$\scriptstyle{\mathcal{T}}$}
\put(-204,84){$\scriptstyle{z}$}
\put(-188,25){$\scriptstyle{y}$}
\put(-83,-3){$\scriptstyle{x}$}
\caption{The barrier and winning subspaces for three noncollinear-projection active pursuers. The barrier $\mathcal{B}$ in yellow is the part of a sphere, and the pursuer winning subspace $\mathcal{W}_P$ is the region above $\mathcal{B}$ and bounded by a triangular prism in blue. The evader winning subspace $\mathcal{W}_E$ is the remainder in $\Omega_{\rm play}$, and $S$ is a closed region in $\mathcal{T}$ bounded by a triangle with three pursuers' projections in $\mathcal{T}$ as its vertexes. The dashed line $\mathcal{L}_{i,j}$ is the projection line of two pursuers $P_i$ and $P_j$ in $\mathcal{T}$.\label{fig:4}} 
\end{figure}

\begin{thom}[Three collinear-projection active pursuers]\label{thom3v1collinear}\rm
Consider the system (\ref{dynamics}) and suppose that (\ref{threefromthree}) is true. If the projections of three pursuers' initial positions into $\mathcal{T}$ are collinear and $P_i$'s projection $\bar{\mathbf{x}}_{P_i}^0$ sits in the middle, then the barrier $\mathcal{B}$ and two winning subspaces $\mathcal{W}_P$ and $\mathcal{W}_E$ are respectively given by
\begin{equation}\begin{aligned}\label{winningregion3v1equal1}
\mathcal{B}=\bigcup_{k\in\mathcal{I}_i}\mathcal{B}^{i,k},\mathcal{W}_P=\bigcup_{k\in\mathcal{I}_i}\mathcal{W}_P^{i,k},\mathcal{W}_E=\bigcap_{k\in\mathcal{I}_i}\mathcal{W}_E^{i,k}.
\end{aligned}\end{equation}
\end{thom}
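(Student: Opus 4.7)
The plan is to reduce the three-pursuer problem to two applications of \autoref{thom2v1equal1}, namely to the pairs $(P_i,P_j)$ and $(P_i,P_k)$ where $\{j,k\}=\mathcal{I}_i$, and to show that the outer pair $(P_j,P_k)$ contributes nothing new. The governing geometric observation is that, under the collinear-projection hypothesis with $\bar{\mathbf{x}}_{P_i}^0$ in the middle, the three lines $\mathcal{L}_{i,j},\mathcal{L}_{i,k},\mathcal{L}_{j,k}$ all coincide with a single line in $\mathcal{T}$, and the three vertical planes $\mathcal{P}_{i,j},\mathcal{P}_{i,k},\mathcal{P}_{j,k}$ collapse onto a single vertical plane $\mathcal{P}$. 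Consequently, every candidate barrier piece, together with $\mathscr{B}^1,\mathscr{B}^2,\mathscr{B}^3$ restricted to $\mathcal{P}$, lives in this one vertical plane, turning the problem essentially two-dimensional and organizing the analysis around the three perpendicular bisectors of the segments $\mathbf{x}_{P_i}^0\mathbf{x}_{P_j}^0,\mathbf{x}_{P_i}^0\mathbf{x}_{P_k}^0,\mathbf{x}_{P_j}^0\mathbf{x}_{P_k}^0$ within $\mathcal{P}$.

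For the inclusion $\bigcup_{k\in\mathcal{I}_i}\mathcal{B}^{i,k}\subseteq\mathcal{B}$, I would take $\mathbf{x}_E^0\in\mathcal{B}^{i,k}$ and argue first that by \autoref{thom2v1equal1} the subteam $\{P_i,P_k\}$ alone can force capture exactly on $\mathcal{T}$, so the full team with $P_j$ added can do no worse; second that $P_j$ does not strictly improve the interception, meaning $\mathscr{B}$ still touches $\mathcal{T}$ rather than sitting entirely above it. The latter uses the middle-projection hypothesis: the intercept point on $\mathcal{T}$ predicted by the pair $(P_i,P_k)$ has $\bar{\mathbf{x}}_{P_j}^0$ on the far side of $\bar{\mathbf{x}}_{P_i}^0$ along the common line, so $P_j$ is strictly farther than $P_i$ and cannot remove that contact. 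For the reverse inclusion, I would analyze $\mathscr{B}=\partial(\mathscr{E}^1\cap\mathscr{E}^2\cap\mathscr{E}^3)$ directly: its contact locus with $\mathcal{T}$ is contained in $\mathscr{B}^i\cap\mathcal{T}$, $\mathscr{B}^{i}\cap\mathscr{B}^{k}\cap\mathcal{T}$, or $\mathscr{B}^{j}\cap\mathscr{B}^{k}\cap\mathcal{T}$, and the last of these is empty inside $\mathscr{E}^i$ because $P_i$ reaches any such point strictly first. The two remaining cases match the three pieces of \autoref{thom2v1equal1} applied to $(P_i,P_j)$ and $(P_i,P_k)$, giving $\mathcal{B}=\bigcup_{k\in\mathcal{I}_i}\mathcal{B}^{i,k}$. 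The winning-subspace formulas then follow by the same case analysis: $\mathbf{x}_E^0\in\mathcal{W}_P$ iff $\mathscr{B}$ sits strictly inside $\Omega_{\rm play}$, which by the above reduction is equivalent to at least one of $\mathscr{B}^{i,j}$ or $\mathscr{B}^{i,k}$ doing so, hence $\mathcal{W}_P=\bigcup_{k\in\mathcal{I}_i}\mathcal{W}_P^{i,k}$; its complement in $\Omega_{\rm play}\setminus\mathcal{B}$ is $\bigcap_{k\in\mathcal{I}_i}\mathcal{W}_E^{i,k}$.

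The main obstacle will be rigorously excluding any contribution from the outer pair $(P_j,P_k)$. Specifically, one must show that every $\mathbf{x}_E^0$ lying on the hypothetical third-pair barrier $\mathcal{B}^{j,k}_3$ of \autoref{thom2v1equal1} satisfies $\mathbf{x}_E^0\notin\mathscr{E}^i$, so that the intersection defining $\mathscr{E}$ in (\ref{ES3}) reshapes $\mathscr{B}$ away from the candidate arc. In $\mathcal{P}$ this amounts to a planar distance comparison driven by the middle-projection relation $\bar{\mathbf{x}}_{P_i}^0\in[\bar{\mathbf{x}}_{P_j}^0,\bar{\mathbf{x}}_{P_k}^0]$, but making the elimination airtight across all admissible $z$-coordinates of the three pursuers, and handling the junction of the two arcs $\mathcal{B}^{i,j}$ and $\mathcal{B}^{i,k}$ at the apex above $P_i$, is the technically delicate step.
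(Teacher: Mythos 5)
Your proposal follows essentially the same route as the paper's proof: both reduce the three-pursuer barrier to the two pairwise barriers $\mathcal{B}^{i,j}$ and $\mathcal{B}^{i,k}$ and eliminate the outer pair $(P_j,P_k)$ by the observation that, with $\bar{\mathbf{x}}_{P_i}^0$ in the middle, there is no point of $\mathcal{T}$ that both $P_j$ and $P_k$ can reach before $P_i$. The paper's argument is in fact far terser than yours (it asserts this elimination and the winning-subspace formulas in a few sentences), so the ``technically delicate'' steps you flag --- ruling out the arc $\mathcal{B}^{j,k}_3$ for all admissible $z$-coordinates and handling the junction of the two arcs above $P_i$ --- are precisely the details the paper leaves implicit; your plan is a sound and more careful version of the same argument.
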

\begin{proof}See Fig. \ref{fig:3}. Since (\ref{threefromthree}) is true and three pursuers' projections in $\mathcal{T}$ are collinear with $\bar{\mathbf{x}}_{P_i}^0$ in the middle, there is no point in $\mathcal{T}$ that both two pursuers in $\mathcal{I}_i$ can reach before $P_i$. Thus, the barrier $\mathcal{B}$ consists of two parts $\mathcal{B}^{i,k}$ with $k\in\mathcal{I}_i$, where we take $i=2$ in Fig. \ref{fig:3}.

It can be observed that the winning subspace $\mathcal{W}_P$ is the union set of $\mathcal{W}_P^{i,k}$ with $k\in\mathcal{I}_i$. The winning subspace $\mathcal{W}_E$ is naturally the remainder in $\Omega_{\rm play}$, and it can also be written as the intersection set of $\mathcal{W}_E^{i,k}$ with $k\in\mathcal{I}_i$.
\end{proof}

Next, we consider the case in which $\bar{\mathbf{x}}_{P_i}^0$ $(i=1,2,3)$ are not collinear. Denote by $S$ the closed region in $\mathcal{T}$ bounded by a triangle with three projections $\bar{\mathbf{x}}_{P_i}^0$ $(i=1,2,3)$ as its vertexes. Denote by $\bm{c}$ the unique point in $\mathcal{T}$ that has the equal distance to three pursuers. 

\emph{\textbf{Theorem} 3} \emph{(Three noncollinear-projection active pursuers):} Consider the system (\ref{dynamics}) and suppose that (\ref{threefromthree}) is true. If the projections of three pursuers' initial positions into $\mathcal{T}$ are not collinear, the barrier $\mathcal{B}$ is given as follows:
\begin{equation}\begin{aligned}\label{barrier3v1colinear1}
\mathcal{B}=\big\{\mathbf{z}\in\mathbb{R}^3|\|\mathbf{z}-\bm{c}\|_2=\|\mathbf{x}_{P_1}^0-\bm{c}\|_2,(x,y)\in S,z>0\big\}
\end{aligned}\end{equation}
and two winning subspaces $\mathcal{W}_P$ and $\mathcal{W}_E$ are respectively given by
\begin{equation}\begin{aligned}\label{winningregions3v1uncolinear}
\mathcal{W}_P&=\big\{\mathbf{z}\in\mathbb{R}^3|\|\mathbf{z}-\bm{c}\|_2>\|\mathbf{x}_{P_1}^0-\bm{c}\|_2,(x,y)\in S,z>0\big\}\\
\mathcal{W}_E&=\Omega_{\rm play}\setminus(\mathcal{B}\cup\mathcal{W}_P).
\end{aligned}\end{equation}

\begin{proof}See Fig. \ref{fig:4}. Assume that $\mathbf{x}_E^0\in\mathcal{B}$. Thus, $\mathscr{B}$ intersects with $\mathcal{T}$ while does not intersect with $\Omega_{\rm goal}$. Since $\mathscr{B}$ is the boundary of the intersection set $\mathscr{E}^1\cap\mathscr{E}^2\cap\mathscr{E}^3$, thus $\mathscr{B}$ is made up of parts of $\mathscr{B}^i$ $(i=1,2,3)$.

If the intersection points between $\mathscr{B}$ and $\mathcal{T}$ only belong to $\mathscr{B}^i$, we have $\mathbf{x}_E^0\in\mathcal{B}^i$. If these intersection points only belong to the set $\mathscr{B}^i\cap\mathscr{B}^j$, we have $\mathbf{x}_E^0\in\mathcal{B}^{i,j}$.

Finally, we consider the case in which these intersection points belong to the set $\mathscr{B}^1\cap\mathscr{B}^2\cap\mathscr{B}^3$. Since (\ref{threefromthree}) is true and three pursuers' projections $\bar{\mathbf{x}}_{P_i}^0$ $(i=1,2,3)$ are not collinear, then $\mathscr{B}^1\cap\mathscr{B}^2\cap\mathscr{B}^3$ has the unique element, i.e., the point $\bm{c}$. Thus, $\mathbf{x}_E^0$ should lie on the sphere of radius $\|\mathbf{x}_{P_1}^0-\bm{c}\|_2$ centered at $\bm{c}$. Additionally, $\mathscr{E}=\mathscr{E}^1\cap\mathscr{E}^2\cap\mathscr{E}^3$ guarantees that $\mathbf{x}_E^0$'s projection in $\mathcal{T}$ should lie inside the triangle $S$. Thus, the barrier $\mathcal{B}$ is derived and can be described as (\ref{barrier3v1colinear1}) shows, depicted in Fig. \ref{fig:4}.

Similarly, we only consider the winning subspace $\mathcal{W}_P$. If $\mathbf{x}_E^0\in\mathcal{W}_P$, $\mathscr{B}$ does not intersect with $\mathcal{T}\cup\Omega_{\rm goal}$. First, $\mathbf{x}_E^0$'s projection in $\mathcal{T}$ lies in $S$. Otherwise, assume that $\mathbf{x}_E^0$'s projection lies outside $S$ and in the opposite side of $\bar{\mathbf{x}}_{P_k}^0$ $(k\in\mathcal{I}_{i,j})$ with respect to $\mathcal{L}_{i,j}$. Then, there always exist points in $\mathcal{T}$ far away from $\mathcal{L}_{i,j}$ along the side of $\mathbf{x}_E^0$'s projection, such that $E$ can reach before all three pursuers.

Since $\mathbf{x}_E^0$'s projection in $\mathcal{T}$ lies in $S$, we can conclude that $\mathbf{x}_E^0$ should lie outside the sphere defined above. Thus, the winning subspace $\mathcal{W}_P$ is given by (\ref{winningregions3v1uncolinear}).
\end{proof}

\begin{figure}\centering
\graphicspath{{figure_final/}}
\includegraphics[width=70mm,height=48mm]{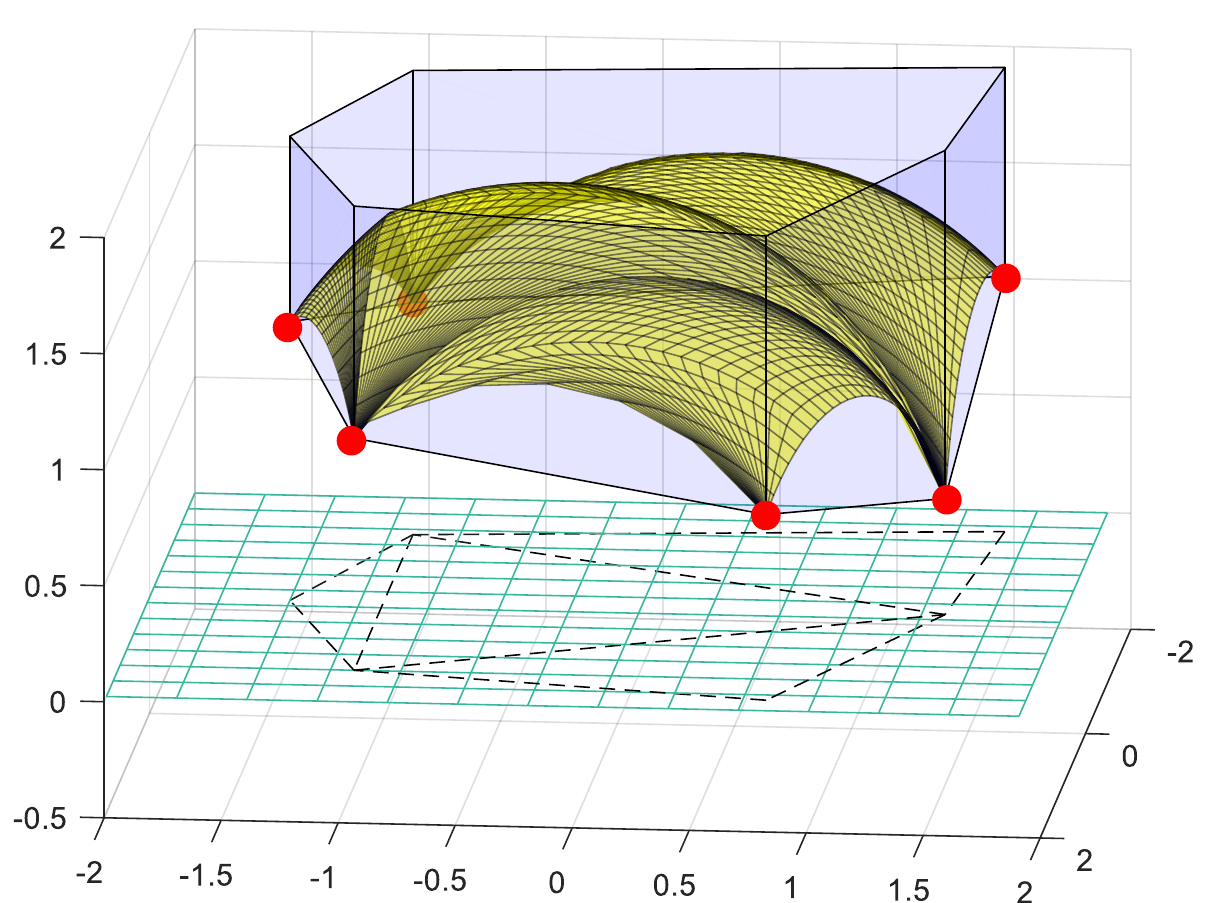}
\put(-163,81){$\scriptstyle{P_1}$}
\put(-151,63){$\scriptstyle{P_2}$}
\put(-85,52){$\scriptstyle{P_3}$}
\put(-40,60){$\scriptstyle{P_4}$}
\put(-31,92){$\scriptstyle{P_5}$}
\put(-141,108){$\scriptstyle{P_6}$}
\put(-110,41){$\scriptstyle{S}$}
\put(-100,88){$\scriptstyle{\mathcal{B}}$}
\put(-100,118){$\scriptstyle{\mathcal{W}_P}$}
\put(-28,115){$\scriptstyle{\mathcal{W}_E}$}
\put(-42,30){$\scriptstyle{\mathcal{T}}$}
\put(-204,60){$\scriptstyle{z}$}
\put(-103,-4){$\scriptstyle{y}$}
\put(-9,15){$\scriptstyle{x}$}
\caption{The barrier and winning subspaces for six noncollinear-projection active pursuers. The barrier $\mathcal{B}$ in yellow consists of the part of four spheres, and the pursuer winning subspace $\mathcal{W}_P$ is the region above $\mathcal{B}$ and bounded by a hexagonal prism in blue. The evader winning subspace $\mathcal{W}_E$ is the remainder in $\Omega_{\rm play}$, and $S$ is a closed region in $\mathcal{T}$ bounded by a hexagon with six pursuers' projections in $\mathcal{T}$ as its vertexes.\label{fig:5}} 
\end{figure}
\subsection{Extensions to Multiple Pursuers Case}
Next, we extend the results in former sections into multiple pursuers case. An algorithm is provided here to construct the barrier and winning subspaces for multiple pursuers, shown in Algorithm 1.

\begin{algorithm}
\SetAlgoLined
\KwResult{$\mathcal{B},\mathcal{W}_P$ and $\mathcal{W}_E$ for $n$ pursuers}
 \KwData{$n$ pursuers' initial positions, $\mathcal{T},\Omega_{\rm play}$ and $\Omega_{\rm goal}$}
 \For{each of $n$ pursuers}{
  \eIf{there exists a point that it can reach before the other $n-1$ pursuers}{
		this pursuer is an active pursuer\;
   }{
   this pursuer is not an active pursuer\;
  }
 }
  \eIf{there exist less than three active pursuers}{
		\ construct the barrier and winning subspaces by \autoref{1v1barrierlema2} or \autoref{thom2v1equal1}\;
   }{
   \For{all active pursuers}{
\  find all active triple-pursuer coalitions, in each of which three pursuers have a closer distance to the unique point in $\mathcal{T}$ that they can reach at the same time, before the other active pursuers\;  
 }
\For{all active triple-pursuer coalitions}{
\ construct the barrier and winning subspaces by \autoref{thom3v1collinear} or Theorem 3\;}
\ compute the union sets for all barriers and pursuer winning subspaces, and the intersection set for all evader winning subspaces\;
  }
 
\caption{Barrier and winning subspaces for multiple pursuers\label{al1}}
\end{algorithm}

In Fig. \ref{fig:5}, six active pursuers $P_i$ $(i=1,...,6)$ are considered, which consist of four active triple-pursuer coalitions defined in Algorithm 1: $\{P_1,P_2,P_6\}$, $\{P_2,P_3,P_4\}$, $\{P_2,P_4,P_6\}$ and $\{P_4,P_5,P_6\}$. By computing the barrier and winning subspaces for each active triple-pursuer coalition, the barrier $\mathcal{B}$ and pursuer winning subspace $\mathcal{W}_P$ for the pursuer team are their union sets, and the evader winning subspace $\mathcal{W}_E$ is their intersection set.

Thus, it can be observed from Fig. \ref{fig:5} that multiple pursuers form a net to guarantee that the evader cannot reach the goal region. This result also provides a potential way to design the pursuer placement such that the area of the net is maximized.

\section{Conclusion}\label{conclusion}
A three-pursuer-one-evader reach-avoid differential game in three-dimensional space has been addressed from kind in this paper. By discussing all possible cooperations among pursuers and computing the associated barrier and winning subspaces analytically, which team has a guaranteed winning strategy can be determined before the game evolves. It has also been shown that different relative positions among pursuers result in the barrier and winning subspaces of different shape. The results are also extended to multiple pursuers case, and it has been shown that the pursuers form a net consisting of parts of several spheres to prevent the evader from entering the goal region. Future work will focus on the quantitative problems, such as how to capture the evader within the minimal time. 




\bibliographystyle{IEEEtran}
\bibliography{references}

\begin{thebibliography}{10}
\providecommand{\url}[1]{#1}
\csname url@samestyle\endcsname
\providecommand{\newblock}{\relax}
\providecommand{\bibinfo}[2]{#2}
\providecommand{\BIBentrySTDinterwordspacing}{\spaceskip=0pt\relax}
\providecommand{\BIBentryALTinterwordstretchfactor}{4}
\providecommand{\BIBentryALTinterwordspacing}{\spaceskip=\fontdimen2\font plus
\BIBentryALTinterwordstretchfactor\fontdimen3\font minus
  \fontdimen4\font\relax}
\providecommand{\BIBforeignlanguage}[2]{{%
\expandafter\ifx\csname l@#1\endcsname\relax
\typeout{** WARNING: IEEEtran.bst: No hyphenation pattern has been}%
\typeout{** loaded for the language `#1'. Using the pattern for}%
\typeout{** the default language instead.}%
\else
\language=\csname l@#1\endcsname
\fi
#2}}
\providecommand{\BIBdecl}{\relax}
\BIBdecl

\bibitem{Liu6907859}
S.~Liu, Z.~Zhou, C.~Tomlin, and J.~K. Hedrick, ``Evasion of a team of dubins
  vehicles from a hidden pursuer,'' in \emph{2014 IEEE International Conference
  on Robotics and Automation (ICRA)}, May 2014, pp. 6771--6776.

\bibitem{BOPARDIKAR20091771}
S.~D. Bopardikar, F.~Bullo, and J.~P. Hespanha, ``A cooperative homicidal
  chauffeur game,'' \emph{Automatica}, vol.~45, no.~7, pp. 1771 -- 1777, 2009.

\bibitem{Oyler2016Pursuit}
D.~W. Oyler, P.~T. Kabamba, and A.~R. Girard, ``Pursuit-evasion games in the
  presence of obstacles,'' \emph{Automatica}, vol.~65, pp. 1 -- 11, Mar 2016.

\bibitem{MACIAS2018271}
V.~Macias, I.~Becerra, R.~Murrieta-Cid, H.~M. Becerra, and S.~Hutchinson,
  ``Image feedback based optimal control and the value of information in a
  differential game,'' \emph{Automatica}, vol.~90, pp. 271 -- 285, 2018.

\bibitem{Mylvaganam7909033}
T.~Mylvaganam, M.~Sassano, and A.~Astolfi, ``A differential game approach to
  multi-agent collision avoidance,'' \emph{IEEE Transactions on Automatic
  Control}, vol.~62, no.~8, pp. 4229--4235, Aug 2017.

\bibitem{Ruiz2013Time}
U.~Ruiz, R.~Murrieta-Cid, and J.~L. Marroquin, ``Time-optimal motion strategies
  for capturing an omnidirectional evader using a differential drive robot,''
  \emph{IEEE Transactions on Robotics}, vol.~29, no.~5, pp. 1180--1196, Oct
  2013.

\bibitem{Yan2017Escape}
R.~Yan, Z.~Shi, and Y.~Zhong, ``Escape-avoid games with multiple defenders
  along a fixed circular orbit,'' in \emph{2017 13th IEEE International
  Conference on Control Automation (ICCA)}, July 2017, pp. 958--963.

\bibitem{GETZ1979421}
W.~Getz and G.~Leitmann, ``Qualitative differential games with two targets,''
  \emph{Journal of Mathematical Analysis and Applications}, vol.~68, no.~2, pp.
  421 -- 430, 1979.

\bibitem{Is1967Diff}
R.~Isaacs, \emph{Differential Games}.\hskip 1em plus 0.5em minus 0.4em\relax
  New York: Wiley, 1967.

\bibitem{Mitchell2005A}
I.~M. Mitchell, A.~M. Bayen, and C.~J. Tomlin, ``A time-dependent
  {H}amilton-{J}acobi formulation of reachable sets for continuous dynamic
  games,'' \emph{IEEE Transactions on Automatic Control}, vol.~50, no.~7, pp.
  947--957, Jul 2005.

\bibitem{Margellos2011Ham}
K.~Margellos and J.~Lygeros, ``Hamilton-{J}acobi formulation for reach-avoid
  differential games,'' \emph{IEEE Transactions on Automatic Control}, vol.~56,
  no.~8, pp. 1849--1861, Aug 2011.

\bibitem{Fisac2015Reach}
J.~F. Fisac, M.~Chen, C.~J. Tomlin, and S.~S. Sastry, ``Reach-avoid problems
  with time-varying dynamics, targets and constraints,'' in \emph{International
  Conference on Hybrid Systems: Computation and Control}, 2015, pp. 11--20.

\bibitem{Chen104941}
M.~Chen and C.~J. Tomlin, ``Hamilton–jacobi reachability: Some recent
  theoretical advances and applications in unmanned airspace management,''
  \emph{Annual Review of Control, Robotics, and Autonomous Systems}, vol.~1,
  no.~1, pp. 333--358, 2018.

\bibitem{8619382}
J.~{Lorenzetti}, M.~{Chen}, B.~{Landry}, and M.~{Pavone}, ``Reach-avoid games
  via mixed-integer second-order cone programming,'' in \emph{2018 IEEE
  Conference on Decision and Control (CDC)}, Dec 2018, pp. 4409--4416.

\bibitem{Chen8267187}
M.~{Chen}, S.~L. {Herbert}, M.~S. {Vashishtha}, S.~{Bansal}, and C.~J.
  {Tomlin}, ``Decomposition of reachable sets and tubes for a class of
  nonlinear systems,'' \emph{IEEE Transactions on Automatic Control}, vol.~63,
  no.~11, pp. 3675--3688, Nov 2018.

\bibitem{Zhou2016Cooperative}
Z.~Zhou, W.~Zhang, J.~Ding, H.~Huang, D.~M. Stipanović, and C.~J. Tomlin,
  ``Cooperative pursuit with {V}oronoi partitions,'' \emph{Automatica},
  vol.~72, pp. 64--72, Oct 2016.

\bibitem{Pierson2017Int}
A.~Pierson, Z.~Wang, and M.~Schwager, ``Intercepting rogue robots: An algorithm
  for capturing multiple evaders with multiple pursuers,'' \emph{IEEE Robotics
  and Automation Letters}, vol.~2, no.~2, pp. 530--537, April 2017.

\bibitem{YAN8279644}
R.~{Yan}, Z.~{Shi}, and Y.~{Zhong}, ``Reach-avoid games with two defenders and
  one attacker: An analytical approach,'' \emph{IEEE Transactions on
  Cybernetics}, vol.~49, no.~3, pp. 1035--1046, March 2019.

\bibitem{Zou8588382}
R.~{Zou} and S.~{Bhattacharya}, ``On optimal pursuit trajectories for
  visibility-based target-tracking game,'' \emph{IEEE Transactions on
  Robotics}, pp. 1--17, 2018.

\bibitem{ChenMo2016Multiplayer}
M.~Chen, Z.~Zhou, and C.~J. Tomlin, ``Multiplayer reach-avoid games via
  pairwise outcomes,'' \emph{IEEE Transactions on Automatic Control}, vol.~62,
  no.~3, pp. 1451--1457, Mar 2017.

\bibitem{Garcia8619026}
E.~{Garcia}, D.~W. {Casbeer}, and M.~{Pachter}, ``The capture-the-flag
  differential game,'' in \emph{2018 IEEE Conference on Decision and Control
  (CDC)}, Dec 2018, pp. 4167--4172.

\bibitem{Yan2017Defense}
R.~Yan, Z.~Shi, and Y.~Zhong, ``Defense game in a circular region,'' in
  \emph{2017 IEEE 56th Annual Conference on Decision and Control (CDC)}, Dec
  2017, pp. 5590--5595.

\bibitem{Li7503136}
W.~{Li}, ``Escape analysis on the confinement-escape problem of a defender
  against an evader escaping from a circular region,'' \emph{IEEE Transactions
  on Cybernetics}, vol.~46, no.~9, pp. 2166--2172, Sep. 2016.

\bibitem{Elliott1972The}
R.~J. Elliott and N.~J. Kalton, ``The existence of value in differential
  games,'' \emph{Memoirs of the American Mathematical Society}, vol. 126, no.
  126, pp. 504--523, 1972.

\end{thebibliography}

\end{document}